\definecolor{apricot}{rgb}{0.98, 0.81, 0.69}
\definecolor{aquamarine}{rgb}{0.5, 1.0, 0.83}
\definecolor{babyblueeyes}{rgb}{0.63, 0.79, 0.95}
\definecolor{bananamania}{rgb}{0.98, 0.91, 0.71}
\definecolor{bittersweet}{rgb}{1.0, 0.44, 0.37}
\definecolor{bluebell}{rgb}{0.64, 0.64, 0.82}
\definecolor{blush}{rgb}{0.87, 0.36, 0.51}
\definecolor{cerulean}{rgb}{0.0, 0.48, 0.65}
\definecolor{darkcyan}{rgb}{0.0, 0.55, 0.55}
\definecolor{airforceblue}{rgb}{0.36, 0.54, 0.66}
\definecolor{antiquefuchsia}{rgb}{0.57, 0.36, 0.51}
\definecolor{asparagus}{rgb}{0.53, 0.66, 0.42}
\definecolor{ballblue}{rgb}{0.13, 0.67, 0.8}
\definecolor{blueviolet}{rgb}{0.54, 0.17, 0.89}
\definecolor{brightgreen}{rgb}{0.4, 1.0, 0.0}
\definecolor{brightcerulean}{rgb}{0.11, 0.67, 0.84}
\definecolor{brightpink}{rgb}{1.0, 0.0, 0.5}
\definecolor{brightturquoise}{rgb}{0.03, 0.91, 0.87}
\definecolor{brightube}{rgb}{0.82, 0.62, 0.91}
\definecolor{brilliantlavender}{rgb}{0.98, 0.86, 1.0}
\definecolor{bluebell}{rgb}{0.64, 0.64, 0.82}
\definecolor{bluegray}{rgb}{0.4, 0.6, 0.8}
\definecolor{bluegreen}{rgb}{0.0, 0.87, 0.87}
\definecolor{brandeisblue}{rgb}{0.0, 0.44, 1.0}
\definecolor{capri}{rgb}{0.0, 0.75, 1.0}
\definecolor{fandango}{rgb}{0.71, 0.2, 0.54}
\definecolor{bubblegum}{rgb}{0.99, 0.76, 0.8}
\definecolor{brilliantrose}{rgb}{1.0, 0.33, 0.64}
\definecolor{brightmaroon}{rgb}{0.76, 0.13, 0.28}
\newtheorem{theorem}{Theorem}[section]
\newtheorem{corollary}[theorem]{Corollary}
\newtheorem{definition}{Definition}
\newtheorem{conjecture}{Conjecture}
\begin{document}
\title{Alternating $N$-expansions}
\author{Karma Dajani and Niels Langeveld}
\address[Karma Dajani]{Department of Mathematics, Utrecht University, P.O.~Box 80010, 3508TA Utrecht, the Netherlands}
\email[Karma Dajani]{k.dajani1@uu.nl}
\address[Niels Langeveld]{ Montanuniversit\"at Leoben, Department Mathematik und Informationstechnologie,
Franz-Josef-Strasse 18
 A-8700 Leoben
AUSTRIA}
\email[Niels Langeveld]{niels.langeveld@unileoben.ac.at}

\date{Version of \today}

\subjclass[2010]{37E05, 28D05, 37E15, 37A45, 37A05}
\keywords{$N$-continued fractions, invariant density, ergodicity, natural extensions}

\maketitle
 
\begin{abstract}
We introduce a family of maps generating continued fractions where the digit $1$ in the numerator is replaced cyclically by some given non-negative integers $(N_1,\ldots,N_m)$.  We prove the convergence of the given algorithm, and study the underlying dynamical system generating such expansions. We prove the existence of  a unique absolutely continuous invariant ergodic measure. In special cases, we are able to build the natural extension and give an explicit expression of the invariant measure. For these cases, we formulate a Doeblin-Lenstra type theorem. For other cases we have a more implicit expression that we conjecture gives the invariant density. This conjecture is supported by simulations. For the simulations we use a method that gives us a smooth approximation in every iteration. 
\end{abstract}

\maketitle
\section*{Introduction}

In this article, we study a variation of the $N$-continued fraction expansions introduced in 2008 by Burger et al.~\cite{BGRKWY208}. These expansions have the form
\begin{equation}
x=\cfrac{N}{d_1+\cfrac{N}{d_2+\cfrac{N}{\ddots}}},\label{introductionexpansion}
\end{equation}
where the digits $d_i\in\mathbb{N}$, for $i\geq 1$.
Seemingly an innocent change, the effects are dramatic: no point has a unique expansion (except 0), quadratic irrationals seem to have all sorts of periodic and non-periodic as well as chaotic expansions, and the associated dynamics are much more complicated.  These expansions are generated iteratively by applying, in a deterministic or random manner, at each step of the expansion one of the maps  $S_k(x)=\dfrac{N}{x}-k$, for $k\ge 1$. 
The periodicity of the $N$-expansions of quadratic irrationals was partially studied in~\cite{BGRKWY208, AW11}. The ergodic properties of some deterministic $N$-expansion algorithms, defined as iterations of an appropriate transformation, were investigated in~\cite{DKW13}, and invariant measures of the random algorithm generating all $N$-expansions were given in~\cite{DO18}. In~\cite{KL17}, the authors concentrated on expansions of points in windows of length 1 to obtain a parametrised family of $N$-expansions with finitely many digits. For certain values of the parameter they were able to obtain the density of the invariant measure. For other values of the parameter where the invariant density could not be found, they gave some numerical estimation of the entropy.

\medskip
Inspired by alternating $\beta$-expansions (see~\cite{CCD2021}) we study alternating $N$-expansions.  These are generated by applying periodically one of a given finite set of transformations defined on distinct intervals, and each transformation has a range equal to the domain of the following transformation. This procedure leads to continued fractions whose numerators  are periodic and the digits are periodically taken from different digit sets.
To be more precise, for $i\in \{1,\ldots, m\}$ let $I_i:=[a_i,a_i+1)$ with $a_i\in\mathbb{N}\cup \{0\}$  be a set of distinct intervals. We will study continued fraction maps $T:\cup I_i \rightarrow \cup I_i$ such that $T(I_i)=I_{(i \mod m) +1}$ for $i\in \{1,\ldots,m\}$. On every interval $I_i$ we choose an $N\in\mathbb{N}$. Let $j(x)=i$ when $x\in I_i$  and  $k(x)= (j(x) \mod m) +1$. Furthermore,  let $\textbf{N}=(N_1,\ldots, N_{m})\in \mathbb{N}^m$. We define  $T:\cup I_i \rightarrow \cup I_i$ as 

\begin{equation}\label{definitionT}
T(x)=\frac{N_{j(x)}}{x}- \left\lfloor \frac{N_{j(x)}}{x} \right\rfloor +a_{k(x)},
\end{equation}
for $x\neq 0$ and $T(0)=0$ whenever $0\in \Omega$. Note that not for every  $\textbf{N}\in \mathbb{N}^m$ this gives us a continued fraction algorithm with only positive digits. This leads us to the following definition:
\begin{definition}[allowable]
Let $I_i:=[a_i,a_i+1)$ with $a_i\in\mathbb{N}\cup \{0\}$  be a set of distinct intervals and $\Omega=\cup_{i=1}^{m} I_i$. We call  $\textbf{N}=(N_1,\ldots, N_{m})\in \mathbb{N}^m$ \textbf{allowable} for $\Omega$ if for all $x\in \Omega$ we have $ \left\lfloor \frac{N_{j(x)}}{x} \right\rfloor -a_{k(x)}>0$.
\end{definition}
Of course, since we can pick the values for $\textbf{N}$ arbitrarily high, we can find a continued fraction algorithm for every choice of distinct intervals with all possible ways of indexing them. Many of these choices will give rise to a dynamical system that is difficult to study since not all branches will be full (a branch of the map $T$ will not be mapped entirely onto the next interval). After proving convergence of associated continued fractions for these general systems, as well as the existence of a unique absolutely continuous invariant ergodic measure, we will shift our focus on choices of $\textbf{N}$ such that we do have full branches. This is desirable for many reasons. Furthermore, we also study the more simple situation for which all values in $\textbf{N}$ are the same. In this case the numerators will not alternate but the sets from which the digits are taken are alternating.
\begin{definition}[desirable and simple]
Let
 $I_i:=[a_i,a_i+1)$ with $a_i\in\mathbb{N}\cup \{0\}$  be a set of distinct intervals and $\Omega=\cup_{i=1}^{m} I_i$ . We call  $\textbf{N}=(N_1,\ldots, N_{m})\in \mathbb{N}^m$ \textbf{desirable} for $\Omega$ if it is allowable and for all $i\in\{1,\ldots,m\}$ we have that $a_i\mid N_i$ and  $a_i+1 \mid N_i$ whenever $a_i\neq0$. Furthermore, we call  $\textbf{N}=(N_1,\ldots, N_{m})\in \mathbb{N}^m$  \textbf{simple} if it is desirable and $N_i=N_j$ for all $i,j\in\{1,\ldots,m\}$.
\end{definition}
Note that for any set of distinct intervals we can find a simple $\textbf{N}$ by taking a large enough multiple of the product of the endpoints of the intervals $I_i$, omitting $0$.  With the aid of some examples we will go through the different scenarios as well as introducing notation and explaining why these systems give rise to continued fractions (this will be done in Section~\ref{sec:examples}). In Section~\ref{sec:convetc} we will prove that the convergents of these continued fractions indeed converge. In Section~\ref{sec:ergodic} we prove that our underlying system has a unique invariant absolutely continuous ergodic measure. In Section~\ref{sec:naturalextension} we give a planar version of the natural extension for the simple (two interval) case and from this construction we are able to exhibit an explicit expression for the invariant density. Furthermore, the natural extension allows us to prove a Doeblin-Lenstra type theorem which tells us something about the quality of the convergents for generic points. In the case of more intervals we are unable to determine a suitable planar domain for the natural extension. We found a candidate for the domain but we were not able to prove that this domain has a positive measure. With simulations we support our conjecture that the candidate we found is indeed correct. For the simulations we provide an algorithm that approximates the domain from above. This algorithm is fast and each iteration will give rise to a smooth approximation of the invariant measure.

\medskip
We could have chosen not to take $a_i$ as distinct integers but as real number such that the corresponding intervals are distinct. Though, we choose to focus our attention to the case that $a_i$ are distinct integers since this already yields interesting results. One can also study $N$-expansions with $N$ being a positive real number (greater than one). This is done for example in~\cite{GS17,M20}. These continued fractions we also leave out of the scope of this article.



\bigskip
\section{Examples}\label{sec:examples}

\textbf{Example 1:} Let $a_1=1$, $a_2=3$ and $\textbf{N}=(9,12)$. We find that $ T: [1,2)\cup[3,4)\rightarrow [1,2)\cup[3,4)$ is given by
\[
 T(x)=\begin{cases}
\frac{9}{x}- \lfloor \frac{9}{x}\rfloor+3  & \text{ for } x\in [1,2),\\
\frac{12}{x}- \lfloor \frac{12}{x}\rfloor+1  & \text{ for } x\in [3,4),\\
\end{cases}
\]
 see Figure \ref{fig:Texample1}.

\begin{figure}[h]
\centering
\begin{tikzpicture}[scale=1.33333]
\draw(1,1)node[below]{\small $1$}--(4,1)node[below]{\small $4$}--(4,4)--(1,4)node[left]{\small $4$}--(1,1);
 \draw[dotted] (9/5,1)node[below]{\small $\frac{9}{5}$}--(9/5,4);
 \draw[dotted] (9/6,1)node[below]{\small $\frac{9}{6}$}--(9/6,4);
  \draw[dotted] (9/7,1)node[below]{\small $\frac{9}{7}$}--(9/7,4);
   \draw[dotted] (9/8,1)node[below]{\small $\frac{9}{8}$}--(9/8,4);
 \draw[dotted] (2,1)node[below]{\small $2$}--(2,4);
 \draw[dotted] (3,1)node[below]{\small $3$}--(3,4);
  \draw[dotted] (1,2)node[left]{\small $2$}--(4,2);
 \draw[dotted] (1,3)node[left]{\small $3$}--(4,3);


\draw[thick, purple!50!black, smooth, samples =20, domain=3:4] plot(\x,{12 / \x -2)});
\draw[thick, purple!50!black, smooth, samples =20, domain=1.8:2] plot(\x,{9 / \x -1)});
\draw[thick, purple!50!black, smooth, samples =20, domain=9/6:1.8] plot(\x,{9 / \x -2)});
\draw[thick, purple!50!black, smooth, samples =20, domain=9/7:9/6] plot(\x,{9 / \x -3)});
\draw[thick, purple!50!black, smooth, samples =20, domain=9/8:9/7] plot(\x,{9 / \x -4)});
\draw[thick, purple!50!black, smooth, samples =20, domain=1:9/8] plot(\x,{9 / \x -5)});
\end{tikzpicture}
\caption{The map $T$ of Example 1.}
\label{fig:Texample1}
\end{figure}
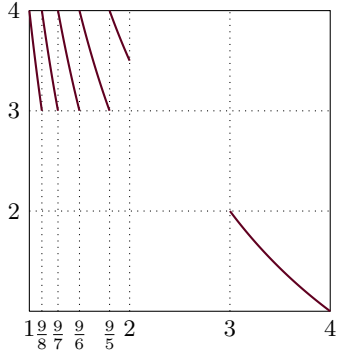

 We see that, since $3\mid12$ and $4\mid12$ we have full branches on $[3,4)$ but since $2\nmid 9$ we do not have full branches on $[1,2)$. Therefore, this choice of $\textbf{N}$ is allowable but not desirable. Note that at the left end point of any of the intervals, $T$ is not continuous giving us an extra digit in case the left point is not equal to $0$.  Setting $d_1(x)=\lfloor \frac{N_{j(x)}}{x} \rfloor -a_{k(x)}$ and $d_n(x)=d_1(T^{n-1}(x))$  we have
\[
T(x)=\frac{N_{j(x)}}{x}- d_1(x)
\]
and so
\[
x=\frac{N_{j(x)}}{d_1(x)+T(x)}.
\]
We can replace $T(x)$ by $T(x)=\frac{N_{j(T(x))}}{d_2(x)+T^2(x)}$ and continue in this manner to find
\[
x=\frac{N_{j(x)}}{\displaystyle d_1(x)+\frac{N_{j(T(x))}}{\displaystyle  d_2(x)+\frac{N_{j(T^2(x))}}{\ddots}}}.
\]
For $x\in [1,2)$ we find
 \[
x=\frac{9}{\displaystyle d_1(x)+\frac{12}{\displaystyle  d_2(x)+\frac{9}{\ddots}}},
\]
with  $d_i \in \{1,\ldots,6\} $ for $i$ odd (only $6$ when $T^{i-1}(x)=1$) and $d_i \in \{2,3\} $ for $i$ even (only $3$ when $T^{i-1}(x)=3$). For $x\in[3,4)$ we find
\[
x=\frac{12}{\displaystyle d_1(x)+\frac{9}{\displaystyle  d_2(x)+\frac{12}{\ddots}}},
\]
with $d_i\in \{2,3\}$ for $i$ odd and $d_i \in \{1,\ldots,6\}$ for $i$ even.\newline

\medskip
\textbf{Example 2}: Let $a_1=1, a_2=2$ and $\textbf{N}=(8,12)$. Then we can define $T:[1,3)\rightarrow[1,3)$ as  
\[
 T(x)=\begin{cases}
\frac{8}{x}- \lfloor \frac{8}{x}\rfloor+2 &  \text{ for } x\in [1,2),\\
\frac{12}{x}- \lfloor \frac{12}{x}\rfloor+1  & \text{ for } x\in [2,3),\\
\end{cases}
\]
see Figure \ref{fig:Texample2}. Note that this choice of $\textbf{N}$ is desirable.

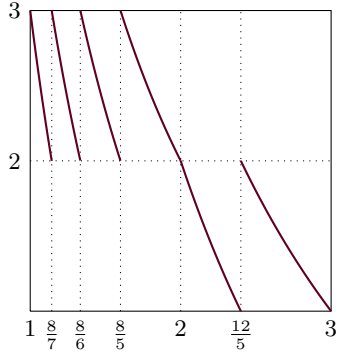
\begin{figure}[h]
\centering
\begin{tikzpicture}
\draw[white] (-0.5,0)--(3,0);
\draw(0,0)node[below]{\small $1$}--(4,0)node[below]{\small $3$}--(4,4)--(0,4)node[left]{\small $3$}--(0,0);
 \draw[dotted] (2.8,0)node[below]{\small $\frac{12}{5}$}--(2.8,4);
  \draw[dotted] (1.2,0)node[below]{\small $\frac{8}{5}$}--(1.2,4);
  \draw[dotted] (.666,0)node[below]{\small $\frac{8}{6}$}--(.666,4);
 \draw[dotted] (0.286,0)node[below]{\small $\frac{8}{7}$}--(0.286,4);
 \draw[dotted] (2,0)node[below]{\small $2$}--(2,4);
 \draw[dotted] (0,2)node[left]{\small $2$}--(4,2);

 \draw[thick, purple!50!black, smooth, samples =20, domain=2.8:4] plot(\x,{2*(12 / (0.5*\x+1) -4)});
\draw[thick, purple!50!black, smooth, samples =20, domain=2:2.8] plot(\x,{2*(12 / (0.5*\x+1) -5)});

\draw[thick, purple!50!black, smooth, samples =20, domain=1.2:2] plot(\x,{2*(8 / (0.5*\x+1) -3)});
\draw[thick, purple!50!black, smooth, samples =20, domain=.666:1.2] plot(\x,{2*(8 / (0.5*\x+1) -4)});
\draw[thick, purple!50!black, smooth, samples =20, domain=.286:.666] plot(\x,{2*(8 / (0.5*\x+1) -5)});
\draw[thick, purple!50!black, smooth, samples =20, domain=0:.286] plot(\x,{2*(8 / (0.5*\x+1) -6)});

\end{tikzpicture}
\caption{The map $T$ of Example 2.}
\label{fig:Texample2}
\end{figure}

For $x\in[1,2)$ we find the continued fraction
\[
x=\frac{8}{\displaystyle d_1+\frac{12}{\displaystyle  d_2+\frac{8}{\ddots}}},
\]
where $d_i\in \{2,3,4,5,6\}$ for $i$ odd (only $6$ when $T^{i-1}(x)=1$) and $d_i\in\{3,4,5\}$ for $i$ even (only $5$ when $T^{i-1}(x)=2$).\newline

\medskip
\textbf{Example 3}:  Let $a_1=0$, $a_2=2$, $a_3=1$, $a_4=3$. This time we choose $\textbf{N}=(12,12,12,12)$ so that we are in the simple case. Note that since there is an $i$ such that $a_i=0$ we have infinitely many branches. We find that $T:[0,4)\rightarrow[0,4)$ is now defined as 
\[
 T(x)=\begin{cases}
\frac{12}{x}- \lfloor \frac{12}{x}\rfloor+2 & \text{ for } x\in [0,1),\\
\frac{12}{x}- \lfloor \frac{12}{x}\rfloor+3 &  \text{ for } x\in [1,2),\\
\frac{12}{x}- \lfloor \frac{12}{x}\rfloor+1 & \text{ for } x\in [2,3),\\
\frac{12}{x}- \lfloor \frac{12}{x}\rfloor &  \text{ for } x\in [3,4),\\
\end{cases}
\]
see Figure \ref{fig:Texample3}. Note that this system is simple.

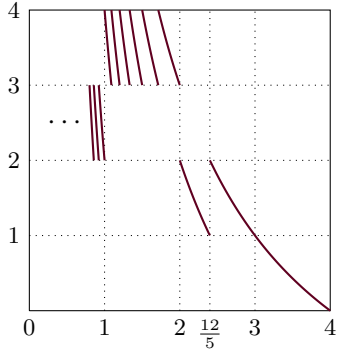
\begin{figure}[h]
\centering
\begin{tikzpicture}
\draw[white] (0,0)--(2,0);
\draw(0,0)node[below]{\small $0$}--(4,0)node[below]{\small $4$}--(4,4)--(0,4)node[left]{\small $4$}--(0,0);
 \draw[dotted] (2.4,0)node[below]{\small $\frac{12}{5}$}--(2.4,4);
 \draw[dotted] (1,0)node[below]{\small $1$}--(1,4);
 \draw[dotted] (2,0)node[below]{\small $2$}--(2,4);
 \draw[dotted] (3,0)node[below]{\small $3$}--(3,4);
 \draw[dotted] (0,1)node[left]{\small $1$}--(4,1);
 \draw[dotted] (0,2)node[left]{\small $2$}--(4,2);
 \draw[dotted] (0,3)node[left]{\small $3$}--(4,3);
 \draw(0.5,2.5)node{\large $\cdots$};

\draw[thick, purple!50!black, smooth, samples =20, domain=2.4:4] plot(\x,{12 / \x -3)});
\draw[thick, purple!50!black, smooth, samples =20, domain=2:2.4] plot(\x,{12 / \x -4)});

\draw[thick, purple!50!black, smooth, samples =20, domain=12/7:2] plot(\x,{12 / \x -3)});
\draw[thick, purple!50!black, smooth, samples =20, domain=12/8:12/7] plot(\x,{12 / \x -4)});
\draw[thick, purple!50!black, smooth, samples =20, domain=12/9:12/8] plot(\x,{12 / \x -5)});
\draw[thick, purple!50!black, smooth, samples =20, domain=12/10:12/9] plot(\x,{12 / \x -6)});
\draw[thick, purple!50!black, smooth, samples =20, domain=12/11:12/10] plot(\x,{12 / \x -7)});
\draw[thick, purple!50!black, smooth, samples =20, domain=1:12/11] plot(\x,{12 / \x -8)});

\draw[thick, purple!50!black, smooth, samples =20, domain=12/13:1] plot(\x,{12 / \x -10)});
\draw[thick, purple!50!black, smooth, samples =20, domain=12/14:12/13] plot(\x,{12 / \x -11)});
\draw[thick, purple!50!black, smooth, samples =20, domain=12/15:12/14] plot(\x,{12 / \x -12)});
\end{tikzpicture}
\caption{The map $T$ of Example 3. The discontinuities are given by $\{ \frac{12}{5}, \frac{12}{6}, \frac{12}{7},\ldots \}$.}
\label{fig:Texample3}
\end{figure}

For $x\in(0,1)$ we find 
\[
x=\frac{12}{\displaystyle d_1+\frac{12}{\displaystyle  d_2+\frac{12}{\ddots}}},
\]
where
\[
d_i(x)\in \begin{cases}
 \mathbb{N}_{\geq10} & \text{ for } i=1\mod 4,\\
  \{3,4,5 \} & \text{ for } i=2\mod 4,\\
   \{3,\ldots, 9\} & \text{ for } i=3 \mod 4,\\
   \{3,4\} & \text{ for } i=0 \mod 4.\\
\end{cases}
\] 
Here $d_i(x)=5$ for $i=2\mod 4$ only when $T^{i-1}(x)=2$,  $d_i(x)=9$ for $i=3\mod 4$ only when $T^{i-1}(x)=1$ and  $d_i(x)=4$ for $i=0\mod 4$ only when $T^{i-1}(x)=3$.

\bigskip
\section{Convergence}\label{sec:convetc}
In this section we prove the convergence for all allowable continued fraction algorithms. We first obtain recurrence relations and several standard equalities and inequalities in the same manner as for the regular continued fraction (by the use of M\"obius transformations).  Throughout this article, $N_k$ will always mean $N_{k\mod m+1}$, where $m$ is the number of intervals of $\Omega$.

\noindent
For any $a,b,c,d\in \mathbb{N}$ we define
\[
\left[
\begin{array}{c c}
 a & b\\
 c& d\\
\end{array}
\right](x)=\frac{ax+b}{cx+d}
\]
and
\[
B_{d,N}=\left[
\begin{array}{c c}
 0 & N\\
 1& d\\
\end{array}
\right].
\]
Furthermore, we define $M_n=B_{d_1,N_1}B_{d_{2},N_{2}}\dots B_{d_n,N_n}$. Note that $\det (M_n)=(-1)^n \prod_{i=1}^n N_i$.
Just as in the classical case we have
\[
M_n=
\left[
\begin{array}{c c}
 p_{n-1}& p_{n}\\
 q_{n-1}& q_{n}\\
\end{array}
\right].
\]
We find that 
\begin{equation}\label{eq:det}
 p_{n-1}q_{n}-q_{n-1} p_{n}=(-1)^n\prod_{i=1}^n N_i . 
\end{equation}
Writing out $M_n=M_{n-1}B_{d_n,N_n}$ gives us the recurrence relations
\begin{eqnarray}
&&p_{-1}=1 \quad p_0=0 \quad p_n=d_np_{n-1}+N_np_{n-2},\\
&&q_{-1}=0 \quad q_0=1 \quad q_n=d_nq_{n-1}+N_nq_{n-2}.\label{req:q}
\end{eqnarray}
Since $x=M_n(T^n(x))$ we can write 
\begin{equation}\label{eq:xfracTk}
x=\frac{p_n+p_{n-1}T^n(x)}{q_n +q_{n-1} T^n(x)}
\end{equation}
and also
\begin{equation}\label{eq:Tn}
T^n(x)=\frac{q_n x -p_n}{-q_{n-1}x+p_{n-1}} .
\end{equation}
 From  (\ref{eq:det}) and (\ref{eq:xfracTk}) we find:
\begin{equation}\label{eq:xminpnqn}
x-\frac{p_n}{q_n}=\frac{(-1)^n N_1\cdots N_nT^n(x)}{q_n(q_n+q_{n-1}T^n(x))} 
\end{equation}
so that
\begin{equation}
|x-\frac{p_n}{q_n}|<\frac{ N_1\cdots N_n \max{\Omega}}{q_n^2}. 
\end{equation}
If we show that 
\[
\lim_{n\rightarrow \infty} \frac{ N_1\cdots N_n }{q_n^2}=0
\]
we have proved convergence. Using the recurrent relation (\ref{req:q}) we find $q_n>(N_n+1)q_{n-2}$ which gives us
\[
 q_n>\begin{cases}
(N_n+1) (N_{n-2}+1) \cdots (N_2+1) & \text{ for } n \text{ even,}\\
(N_n+1) (N_{n-2}+1) \cdots (N_1+1) & \text{ for } n>1 \text{ odd.}\\
\end{cases}
\]
Using this inequality for $q_{n-1}$ and $q_{n-2}$ and substituting this in the recurrent relation gives
\begin{equation}\label{Inequlaity}
 q_n>\begin{cases}
(N_{n-1}+1) (N_{n-3}+1) \cdots (N_1+1)+ N _n(N_{n-2}+1) (N_{n-4}+1) \cdots (N_2+1) & \text{for } n \text{ even,}\\
(N_{n-1}+1) (N_{n-3}+1) \cdots (N_2+1) +N_n(N_{n-2}+1) (N_{n-4}+1) \cdots (N_1+1)  & \text{for } n>1 \text{ odd.}\\
\end{cases}
\end{equation}
We find
\begin{eqnarray*}
 \frac{ N_1\cdots N_n }{q_n^2}&<& \frac{ N_1\cdots N_n }{\Big((N_{n-1}+1) (N_{n-3}+1) \cdots (N_1+1)+ N _n(N_{n-2}+1) (N_{n-4}+1) \cdots (N_2+1) \Big)^2}\\
 &<&   \frac{1 }{\frac{((N_{n-1}+1) (N_{n-3}+1) \cdots (N_1+1))^2}{ N_1\cdots N_n}+2+\frac{( N _n(N_{n-2}+1) (N_{n-4}+1) \cdots (N_2+1))^2}{ N_1\cdots N_n}}\\
  &<&   \frac{1 }{\frac{((N_{n-1}+1) (N_{n-3}+1) \cdots (N_1+1))}{ N_2 N_4\cdots N_n}+2+\frac{( N _n(N_{n-2}+1) (N_{n-4}+1) \cdots (N_2+1))}{ N_1N_3\cdots N_{n-1}}}\\
  &<&   \frac{1 }{\frac{((N_{n-1}+1) (N_{n-3}+1) \cdots (N_1+1))}{ N_2 N_4\cdots N_n}+\frac{( N _n(N_{n-2}+1) (N_{n-4}+1) \cdots (N_2+1))}{ N_1N_3\cdots N_{n-1}}}
\end{eqnarray*}

Suppose  $\textbf{N}=(N_1,\ldots, N_m)$ and $m$ is even ($m$ is odd goes analogous). Now let us write $n=mp+r$ with $0\leq r<m$. Then $N_1\cdots N_n=(N_1\cdots N_m)^p N_1\cdots N_r$ which gives us
\begin{eqnarray*}
   && \frac{1 }{\frac{((N_{n-1}+1) (N_{n-3}+1) \cdots (N_1+1))}{ N_2 N_4\cdots N_n}+\frac{( N _n(N_{n-2}+1) (N_{n-4}+1) \cdots (N_2+1))}{ N_1N_3\cdots N_{n-1}}}\\
  &=& \frac{1 }{\frac{ ( (N_1+1)(N_3 +1)\cdots (N_{m-1}+1))^p (N_1+1)(N_3 +1)\cdots (N_{r-1}+1) }{ (N_2 N_4\cdots N_m)^p N_2 N_4\cdots N_r }+\frac{ ( (N_2+1)(N_4 +1)\cdots (N_m+1))^p (N_2+1)(N_4 +1)\cdots (N_r+1) }{ (N_1 N_3\cdots N_{p-1})^m N_1 N_3\cdots N_{r-1} }}\\
  &<&  \frac{1 }{\frac{ ( (N_1+1)(N_3 +1)\cdots (N_{m-1}+1))^p }{ (N_2 N_4\cdots N_m)^p}C_1+\frac{ ( (N_2+1)(N_4 +1)\cdots (N_m+1))^p }{ (N_1 N_3\cdots N_{m-1})^p  }C_2}\label{ineq:wheneq}\\
  &<&  \frac{1 }{(\frac{ N_1N_3 \cdots N_{m-1}}{ N_2 N_4\cdots N_m)})^pC_1+(\frac{ ( N_2N_4\cdots N_m }{ (N_1 N_3\cdots N_{m-1})  })^pC_2}\label{ineq:whenineq}
\end{eqnarray*}
for some $C_1,C_2\in \mathbb{R}$. Now note that as $n\rightarrow \infty$ we have that $p\rightarrow\infty$ so that whenever $N_1N_3 \cdots N_{m-1}\neq N_2 N_4\cdots N_m$  the last inequality goes to $0$. Whenever $N_1N_3 \cdots N_{m-1}= N_2 N_4\cdots N_m$ then we have
\begin{eqnarray}
&& \frac{1 }{\frac{ ( (N_1+1)(N_3 +1)\cdots (N_{m-1}+1))^p }{ (N_2 N_4\cdots N_m)^p}C_1+\frac{ ( (N_2+1)(N_4 +1)\cdots (N_m+1))^p }{ (N_1 N_3\cdots N_{m-1})^p  }C_2}\\
 &<& \frac{1 }{K_1^m C_1+K_2^m C_2}
\end{eqnarray}
for some $K_1,K_2>1$ and thus we find convergence also in that case.

\bigskip
\section{Ergodicity for the allowable case}\label{sec:ergodic}

Here we prove ergodicity (and the existence of an absolutely continuous invariant measure) when we are in the allowable case. We use the following result of Zweim\"uller (see~\cite{Z2020}). Although the theorem is stated when the underlying space is the unit interval,  the results hold true if $[0,1]$ is replaced by a finite disjoint union of intervals. Throughout the paper, we denote normalized Lebesgue measure by $\lambda$.
\begin{theorem}(Zweim\"uller)\label{zweimullar}
Let $T : [0, 1] \to  [0, 1]$, and let B be a collection (not necessarily finite) of nonempty pairwise disjoint open subintervals with $\lambda(\bigcup B)= 1$ such that $T$ restricted to each element $Z$ of $B$ is continuous and strictly monotone. Suppose $T$ satisfies the following three conditions:
\begin{itemize}
\item[(A)] Adler’s condition: $T^{\prime\prime}/(T^{\prime})^2$ is bounded on $\bigcup B$,
\item[(B)] Finite image condition: $TB=\{TZ:Z\in B\}$ is finite,
\item[(C)] Uniformly eventually expanding: there is a $k$ such that $|(T^k)^{\prime}|>\gamma>1$ on $\bigcup B$.
\end{itemize}
Then there are a finite number of pairwise disjoint open sets $X_1,\cdots, X_n$ such that $TX_i = X_i$ (modulo sets of $\lambda$-measure zero) and $T|_{X_i}$ is conservative and ergodic with respect to $\lambda$.
Almost all points of $[0, 1]\setminus \bigcup_i X_i$ are eventually mapped into one of these ergodic components and every ergodic component $X_i $ can be written as a finite union of open intervals.
Furthermore, each $X_i$ supports an absolutely continuous invariant measure $\mu_i$ which is unique up to a constant factor.
\end{theorem}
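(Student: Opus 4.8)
The plan is to establish this result via the Perron--Frobenius (transfer) operator $\mathcal L f(x)=\sum_{Ty=x}f(y)/|T'(y)|$, letting the three hypotheses play their standard roles: (C) makes inverse branches contracting, (A) upgrades this to \emph{uniformly bounded distortion} for all iterates $T^{n}$, and (B) keeps the underlying geometry finite, which is what makes it possible both to build an invariant density and to control the number of ergodic components. A preliminary reduction: replacing $T$ by $T^{k}$ and $B$ by the partition into maximal intervals of monotonicity of $T^{k}$ (the common refinement of the pullbacks of $B$ under $T^{0},\dots,T^{k-1}$), we may assume $|T'|>\gamma>1$ on $\bigcup B$; conditions (A) and (B) are inherited, and an a.c.\ invariant measure and ergodic decomposition for $T^{k}$ yield those for $T$ after averaging the density over $T^{0},\dots,T^{k-1}$ and grouping the $T^{k}$-components into $T$-cycles.

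The technical core is a R\'enyi-type distortion estimate. For a nested sequence $Z_{0}\supset Z_{1}\supset\cdots$, with $Z_{j}$ a rank-$j$ cylinder on which $T^{j}$ is a homeomorphism onto an element of $T^{j}B$, write $\log(T^{n})'=\sum_{j=0}^{n-1}\log|T'|\circ T^{j}$; for $x,y\in Z_{n}$ the mean value theorem bounds each summand by $\big|\tfrac{T''}{(T')^{2}}(\xi)\big|\,|T'(\xi)|\,|T^{j}x-T^{j}y|$, and since $T^{j}x,T^{j}y$ lie in one cylinder that $T^{\,n-j}$ expands by at least $\gamma^{\,n-j}$ onto one of \emph{finitely many} images, the distances $|T^{j}x-T^{j}y|$ decay geometrically in $n-j$; combined with (A) this sums to a constant $C\ge 1$, independent of $n$ and of $Z_{n}$, with $C^{-1}\le (T^{n})'(x)/(T^{n})'(y)\le C$ for all $x,y\in Z_{n}$. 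I expect this estimate --- which genuinely needs the \emph{interaction} of (A), (B), (C) along orbits, not any one of them in isolation --- to be the main obstacle.

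Granting the distortion bound, the invariant density comes out as follows. By (B) the sets $T^{n}Z_{n}$ lie in a fixed finite family, so $\lambda(T^{n}Z_{n})\ge\delta>0$ uniformly; change of variables and the distortion bound give $\lambda(Z_{n})\asymp 1/(T^{n})'(x)$ for $x\in Z_{n}$, whence $\mathcal L^{n}\mathbf 1\le M$ for a fixed $M$ and (again using the distortion bound and finiteness of $T^{n}B$) the functions $\mathcal L^{n}\mathbf 1$ have uniformly bounded variation. The Ces\`aro averages $h_{n}=\tfrac1n\sum_{j=0}^{n-1}\mathcal L^{j}\mathbf 1$ are then nonnegative, of integral $1$, bounded by $M$, and of uniformly bounded variation, so Helly's selection theorem together with bounded convergence yields an $L^{1}$-limit $h$ of a subsequence with $\mathcal L h=h$ (using that $\mathcal L$ is an $L^{1}$-contraction and $\mathcal L h_{n}-h_{n}=\tfrac1n(\mathcal L^{n}\mathbf 1-\mathbf 1)\to 0$); thus $d\mu=h\,d\lambda$ is $T$-invariant, $\mu\ll\lambda$, finite, and $h$ is bounded above and bounded below on each interval of its support.

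Finally, the ergodic decomposition. The support of any a.c.\ invariant measure is forward invariant mod $\lambda$, and since such densities are bounded below on their supports while, by (B), these supports are assembled from a \emph{finite} collection of basic intervals, there are only finitely many minimal ones; calling them $X_{1},\dots,X_{n}$ we get $TX_{i}=X_{i}$ mod $\lambda$, and $T|_{X_{i}}$ is conservative because it preserves the finite measure $\mu|_{X_{i}}$. A Hopf-type argument --- a $T$-invariant subset of $X_{i}$ has, by the distortion bound, Lebesgue density $0$ or $1$ on each basic interval meeting $X_{i}$ --- gives ergodicity after at most finitely many further subdivisions, the finiteness again coming from (B). Two a.c.\ invariant measures carried by one ergodic component are mutually absolutely continuous with $T$-invariant Radon--Nikodym derivative, hence proportional, which is the uniqueness clause. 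And $\lambda$-a.e.\ point of $[0,1]\setminus\bigcup_{i}X_{i}$ eventually enters some $X_{i}$: expansion spreads any subinterval in finitely many steps onto one of the finitely many images, each meeting $\bigcup_{i}X_{i}$ in positive measure, so the set of points avoiding $\bigcup_{i}X_{i}$ forever is $\lambda$-null.
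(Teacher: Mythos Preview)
The paper does not prove this theorem at all: it is quoted verbatim as a result of Zweim\"uller, with the citation ``see~\cite{Z2020}'', and is then \emph{applied} as a black box in the proof of the subsequent Theorem~3.2. There is therefore no proof in the paper to compare your proposal against.

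That said, your sketch follows exactly the standard route by which such AFU-type results (Adler's condition, finite image, uniformly eventually expanding) are established in the literature, including in Zweim\"uller's own work: reduce to the expanding case by passing to $T^{k}$; derive a uniform R\'enyi distortion bound from (A)+(C) summed along orbits; use (B) to get a uniform lower bound on $\lambda(T^{n}Z_{n})$ and hence uniform BV/$L^\infty$ control of $\mathcal{L}^{n}\mathbf{1}$; extract an invariant density by Helly/compactness; and read off the finite ergodic decomposition from the finite image geometry plus a density-point argument. As a plan this is sound. If you want it to become an actual proof you should tighten two places: first, when you pass from (A) to the summand bound you write $|T''/(T')^{2}|\cdot|T'|\cdot|T^{j}x-T^{j}y|$, but (A) bounds $T''/(T')^{2}$, not $T''/T'$, so you need to phrase the mean-value step for $\log|T'|$ correctly (the usual trick is $|\log|T'(u)|-\log|T'(v)||\le \sup|T''/(T')^{2}|\cdot|T'(\xi)|\cdot|u-v|$ and then absorb $|T'(\xi)|$ into the geometric decay of cylinder lengths); second, the ``bounded variation'' claim for $\mathcal{L}^{n}\mathbf{1}$ needs the observation that the inverse-branch derivatives are summable (which follows from expansion) together with the finite-image condition, and you should say so explicitly rather than leave it implicit.
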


\begin{theorem}
Let $\Omega=\bigcup_{i=1}^{m} I_i$ where $I_i:=[a_i,a_i+1)$, with $a_i\in\mathbb{N}\cup \{0\}$, are distinct intervals. Assume $\textbf{N}=(N_1,\ldots, N_{m})\in \mathbb{N}^m$ is allowable for $\Omega$ and $T:\Omega \to\Omega$ as given in equation (\ref{definitionT}). Then, $T$ admits a unique absolutely continuous invariant ergodic measure.
\end{theorem}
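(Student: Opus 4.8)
The plan is to apply Zweimüller's Theorem~\ref{zweimullar} to $T$ on $\Omega$, with $\lambda$ the normalized Lebesgue measure and $B$ the collection of interiors of the monotonicity branches of $T$, and then to show that the resulting family of ergodic components reduces to a single one. On $I_i=[a_i,a_i+1)$ the branch with $\lfloor N_i/x\rfloor=d$ is $(N_i/(d+1),N_i/d\,]\cap I_i$, on which $T(x)=N_i/x-d+a_{k(i)}$ is smooth and strictly monotone with $T'(x)=-N_i/x^{2}$ and $T''(x)=2N_i/x^{3}$; hence $T''/(T')^{2}=2x/N_i$ is bounded on $\bigcup B$ because $\Omega$ is bounded and $N_i\ge 1$, which is Adler's condition~(A). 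For the finite image condition~(B): if $a_i\ge 1$ then $\lfloor N_i/x\rfloor$ takes only finitely many values on $I_i$, so there are finitely many branches there; if $a_i=0$ there are infinitely many branches on $I_i$, but the interior of each of them maps onto the single open interval $(a_{k(i)},a_{k(i)}+1)$. In either case $TB$ is a finite union of finite sets, hence finite.

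For the expansion condition~(C) I would reuse the estimates of Section~\ref{sec:convetc}. From~(\ref{eq:xfracTk}) one gets, by a short induction, the telescoping identity $xq_{k-1}-p_{k-1}=(-1)^{k-1}\prod_{j=0}^{k-1}T^{j}(x)$; combining it with the derivative of~(\ref{eq:Tn}) and with~(\ref{eq:det}) gives
\[
|(T^{k})'(x)|=\frac{N_1\cdots N_k}{\big(xq_{k-1}-p_{k-1}\big)^{2}}=\frac{N_1\cdots N_k}{\Big(\textstyle\prod_{j=0}^{k-1}T^{j}(x)\Big)^{2}} .
\]
On the other hand, $|xq_{k-1}-p_{k-1}|=q_{k-1}\,|x-p_{k-1}/q_{k-1}|\le N_1\cdots N_{k-1}\,\max\Omega\,/\,q_{k-1}$ by~(\ref{eq:xminpnqn}), so
\[
|(T^{k})'(x)|\ \ge\ \frac{N_k\,q_{k-1}^{2}}{N_1\cdots N_{k-1}\,(\max\Omega)^{2}}\ \ge\ \frac{1}{(\max\Omega)^{2}}\cdot\frac{q_{k-1}^{2}}{N_1\cdots N_{k-1}} .
\]
Plugging in the digit-free lower bound~(\ref{Inequlaity}) for $q_{k-1}$, the right-hand side is bounded below by a quantity that does not depend on $x$ and which tends to infinity with $k$ by exactly the computation that established convergence in Section~\ref{sec:convetc}; choosing $k$ large enough yields $|(T^{k})'|>\gamma>1$ on $\bigcup B$, i.e.~(C). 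Theorem~\ref{zweimullar} then produces finitely many pairwise disjoint open sets $X_1,\dots,X_r$, each a finite union of open intervals, with $TX_\ell=X_\ell$ (mod $\lambda$), $T|_{X_\ell}$ conservative and ergodic, each carrying an absolutely continuous invariant measure $\mu_\ell$ unique up to a constant, and $\lambda$-a.e.\ point of $\Omega$ eventually entering $\bigcup_\ell X_\ell$.

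It remains to show $r=1$, which is the crux. The mechanism I would use is the following. Since $T(I_i)\subseteq I_{(i\bmod m)+1}$ and $i\mapsto (i\bmod m)+1$ is a single $m$-cycle, forward invariance forces each $X_\ell$ to meet every $I_i$ in a set of positive measure. Conditions (A) and (C) give the R\'enyi/bounded-distortion property along full cylinders, so the density of each $\mu_\ell$ is bounded away from $0$ and $\infty$ on $X_\ell$. Finally, any interval inside $I_i$ of length at least $2(\max\Omega)^2/\min_i N_i$ automatically contains a full branch (since then the image of that interval under $x\mapsto N_i/x$ has length at least $2$, hence contains an interval $[d,d+1]$), and by (C) iterating an arbitrary interval under $T$ eventually produces a piece of that length; thus a forward image of any interval covers a whole branch-image, and chasing full branches around the $m$-cycle shows that the (at most $r$) components cannot be kept apart. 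Hence $r=1$, the normalized $\mu_1$ is the unique absolutely continuous invariant measure, and it is ergodic.

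The step I expect to be genuinely delicate is this last one: making the ``iterate until a full branch appears'' argument rigorous in the presence of the cutting produced by the (possibly infinitely many) branches and of the non-full branches abutting the endpoints $a_i$, and in the case $a_i=0$ where branches accumulate at $0$ (one may need to organise it as an irreducibility statement for the finite transition structure carried by the collection $TB$, restricted to its recurrent part). Everything else is a routine verification of Zweimüller's hypotheses resting on the estimates of Section~\ref{sec:convetc}.
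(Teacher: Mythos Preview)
Your overall strategy coincides with the paper's: verify Zweim\"uller's hypotheses (A), (B), (C) and then argue that there is a single ergodic component. Your checks of (A), (B), (C) are correct, with two cosmetic differences. For (A) you verify Adler's condition for $T$ itself via $T''/(T')^2=2x/N_{j(x)}$, which is bounded because $\Omega$ is bounded; this is shorter than the paper, which instead checks the condition for the iterate $T^k$ and obtains $\bigl|(T^k)''/((T^k)')^2\bigr|<2$. For (C) both routes pass through $|(T^k)'(x)|=\prod_{i=1}^kN_i\,/\,(q_{k-1}x-p_{k-1})^2$, but the paper bounds the denominator via (\ref{qnxminpn}) and then uses $(a+b)^2\ge 4ab$ on (\ref{Inequlaity}) to get $q_k^2>4\prod_{i=1}^kN_i$ and hence $|(T^k)'|>4$ already for every $k\ge 2$; your bound through (\ref{eq:xminpnqn}) and an appeal to the convergence computation is correct but less explicit.

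The substantive divergence is the uniqueness step. The paper disposes of it in one sentence (``it is easy to see that $\Omega$ is the smallest forward invariant set''), whereas you attempt a transitivity argument through full branches. Your specific mechanism, however, can be vacuous: the threshold $2(\max\Omega)^2/\min_iN_i$ may exceed $1$, so no subinterval of any $I_i$ can ever reach that length, and in fact some $I_i$ may carry \emph{no} full branch of $T$ at all (take $m=2$, $a_1=5$, $a_2=0$, $N_1=N_2=6$: on $[5,6)$ the map has a single non-full branch with image $(0,0.2]$, and the attractor is $[5,6)\cup(0,0.2]\subsetneq\Omega$). What does always hold is that the interval $I_{i_0}$ with the smallest $a_{i_0}$ contains a full branch, since allowability forces $N_{i_0}\ge (a_{i_0}+1)(a_{k(i_0)}+1)\ge (a_{i_0}+1)(a_{i_0}+2)>a_{i_0}(a_{i_0}+1)$; routing your expansion/distortion argument through this particular interval (so that an interval in any $X_\ell$ is first pushed around the cycle into $I_{i_0}$ and only there blown up to contain a full branch) closes the gap you identified. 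The paper's one-line claim does not address this issue either.
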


\begin{proof}
We apply Theorem (\ref{zweimullar}).
For our setup, we take $B$ to be the collection of all fundamental intervals of rank 1. Since $\Omega$ consists of $m$ disjoint intervals and on each each interval $I_i$, there are at most two fundamental intervals (corresponding to the smallest and largest digit on that interval) that might not be mapped onto $I_{i+1}$, we see that $TB$ consists of at most $2m$ elements. Hence $T$ satisfies condition $(B)$.
We only need to check conditions $(A)$ and $(C)$ since the rest is clearly satisfied.
Using (\ref{eq:det}) and  (\ref{eq:Tn}) we find 
\begin{equation}\label{eq:derTk}
(T^k)^\prime(x)=\frac{(-1)^k  \prod_{i=1}^k N_i}{(p_{k-1}-q_{k-1}x)^2}.
\end{equation}
We will first prove (C) and find a $k,
 \gamma$ such that $\left|(T^k(x))^\prime\right|>\gamma>1 $.
 For (A) we remark that $T$ can be replaced by $T^k$ and the theorem will still hold. In the second part of the proof we find an $M>0$ such that $\left|\frac{(T^k(x))^{\prime \prime}}{((T^k(x))^\prime)^2}\right|\leq M$. 
We will assume with no loss of generality that $x\in I_1$ since the proof for $x\in I_i$ follows the same arguments.
Note that by definition we have 
\[
T^{k+1}(x)=\frac{N_{k+1}}{T^k(x)}-d_{k+1}
\]
which gives
\begin{equation}\label{eq:TninTnplus1}
T^k(x)=\frac{N_{k+1}}{d_{k+1}+T^{k+1}(x)}.
\end{equation}
Substituting (\ref{eq:TninTnplus1}) in (\ref{eq:xminpnqn}) gives
\begin{equation}
x-\frac{p_k}{q_k}=\frac{(-1)^k N_1\cdots N_k \frac{N_{k+1}}{d_{k+1}+T^{k+1}(x)}}{q_k(q_k+q_{k-1}\frac{N_{k+1}}{d_{k+1}+T^{k+1}(x)})}= \frac{(-1)^k N_1\cdots N_k N_{k+1}}{q_k(q_k(d_{k+1}+T^{k+1}(x))+N_{k+1}q_{k-1})}.
\end{equation}
Using the recurrence relation (\ref{req:q}) we find
\begin{equation}\label{eq:xminpkqk}
x-\frac{p_k}{q_k}=\frac{(-1)^k N_1\cdots N_k N_{k+1}}{q_k(q_{k+1} +q_kT^{k+1}(x))}.
\end{equation}
This, and (\ref{req:q})  gives us 
\begin{equation}
\frac{N_1\cdots N_k N_{k+1}}{q_kq_{k+1}(1+\max \Omega)}<\left|x-\frac{p_k}{q_k}\right|<\frac{N_1\cdots N_n N_{k+1}}{q_kq_{k+1}}.
\end{equation}
This gives us 
\begin{eqnarray}
&&\frac{N_1\cdots N_k N_{k+1}}{q_{k+1}(1+\max \Omega)}<\left|q_kx-p_k\right|<\frac{N_1\cdots N_k N_{k+1}}{q_{k+1}}\label{ineq}\\
&&\frac{N_1^2\cdots N_k^2 N_{k+1}^2}{q_{k+1}^2(1+\max \Omega)^2}<\left(q_kx-p_k\right)^2<\frac{N_1^2\cdots N_k^2 N_{k+1}^2}{q_{k+1}^2}.\label{qnxminpn}
\end{eqnarray}
From (\ref{eq:derTk}) and (\ref{qnxminpn}) it now follows that 

\begin{equation}
|(T^k)^\prime(x)| > \frac{q_{k}^2   \prod_{i=1}^k N_i }{N_1^2\cdots N_{k-1}^2 N_{k}^2}=\frac{q_{k}^2}{N_1\cdots N_{k-1} N_{k}}.
\end{equation}
Using that $(a+b)^2\geq 4ab$ and equation (\ref{Inequlaity}), we find for $k\geq 2$ that $q_k^2>4 \prod_{i=1}^k N_i$ for both the odd and the even case. Using this in (\ref{qnxminpn}) we find:
\begin{equation}
|(T^k)^\prime(x)| > 4 >1.
\end{equation}
Now we will prove (A), i.e.  $\left|\frac{(T^k)^{\prime \prime}(x)}{((T^k)^\prime(x))^2}\right|\leq M$ for some $M>0$. Using (\ref{eq:derTk}) we find 
\begin{equation}\label{eq:secderTk}
(T^k)^{\prime \prime}(x)=\frac{(-1)^{k}2 q_{k-1}   \prod_{i=1}^k N_i}{(p_{k-1}-q_{k-1}x)^3}.
\end{equation}
Filling (\ref{eq:secderTk}) and (\ref{eq:derTk}) in $\left|\frac{(T^k)^{\prime \prime}(x)}{((T^k)^\prime(x))^2}\right|$ gives
\begin{equation}
\left | \frac{(T^k)^{\prime \prime}(x)}{((T^k)^\prime(x))^2}\right| =\frac{2q_{k-1}  |p_{k-1}-q_{k-1} x|}{\prod_{i=1}^k N_i}.
\end{equation}
Using (\ref{ineq}) we find
\[
\frac{2q_{k-1}  |p_{k-1}-q_{k-1} x|}{\prod_{i=1}^k N_i}<\frac{2q_{k-1}  \prod_{i=1}^k N_i }{q_k \prod_{i=1}^k N_i}<2.
\]
From the definition of our map $T$, it is easy to see that $\Omega$ is the smallest forward invariant set. From this we can conclude from Theorem (\ref{zweimullar}) that $T$ has a unique invariant ergodic measure $\mu_T$ that is absolutely continuous with respect to the Lebesgue measure $\lambda$.
\end{proof}

\bigskip
\section{The natural extension and a  Doeblin-Lenstra type theorem}\label{sec:naturalextension}
 
In this section we study the natural extension of the map $T$ defined above, and we use it to prove a Doeblin-Lenstra type theorem. We will give the natural extension in the simple case when we have two intervals. In the next section we discuss the possibilities and impossibilities of other cases.

\begin{theorem}\label{th:natex}
For a simple system $(T,\Omega,\mu_T,\mathcal{B}_\Omega)$ with $\Omega=[a_1,a_1+1)\cup [a_2,a_2+1)$ we define $X=[a_1,a_1+1)\times [a_2,a_2+1]\cup [a_2,a_2+1)\times [a_1,a_1+1]$. The natural extension is given by $(\overline{\mathcal{T}_T},X,\overline{\mu_T},\mathcal{B}_X)$ where $\overline{\mathcal{T}_T}:X\rightarrow X$ is defined as 
\[
\overline{\mathcal{T}_T}(x,y)=\left(T(x),\frac{N}{d_1(x)+y}\right)
\]
Furthermore, the invariant measure $\overline{\mu_T}$ is given by
\[
\overline{\mu_T}(A)=C \iint\limits_A \frac{N}{(N+xy)^2} dy dx,
\]
where $C^{-1}=\iint\limits_X \frac{N}{(N+xy)^2} dy dx=2 \ln\Big[1+\frac{N}{(N+a_1(a_2+1))(N+a_2(a_1+1))}\Big]$.
\end{theorem}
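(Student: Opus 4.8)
The plan is to verify directly that $\overline{\mathcal{T}_T}$ is a bijection on $X$ (modulo a set of measure zero), that the measure $\overline{\mu_T}$ is invariant under $\overline{\mathcal{T}_T}$, and that the projection onto the first coordinate intertwines $\overline{\mathcal{T}_T}$ with $T$ and pushes $\overline{\mu_T}$ forward to $\mu_T$; minimality of the natural extension then follows from a standard argument since the first-coordinate $\sigma$-algebra generates $\mathcal{B}_X$ under iteration of $\overline{\mathcal{T}_T}^{-1}$.

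First I would pin down the fibered structure. Write $T$ on $I_i=[a_i,a_i+1)$ using digits $d=d_1(x)$, so that $T(x)=\tfrac{N}{x}-d+a_{k}$ where $k=k(x)$, and on each cylinder $\Delta$ (where $d_1\equiv d$ is constant) the branch $x\mapsto T(x)$ is a bijection onto $I_{k}=[a_{k},a_{k}+1)$ because the system is \emph{desirable} (every branch is full — this is exactly where the $a_i\mid N$, $a_i+1\mid N$ hypothesis is used). Correspondingly, for fixed $d$ the second-coordinate map $y\mapsto \tfrac{N}{d+y}$ is a decreasing bijection from the relevant $y$-interval of $X$ onto the complementary $y$-interval: if $x\in I_1$ then $y$ ranges over $[a_2,a_2+1]$, and the admissible digits $d$ partition $(0,\infty)$-worth of values so that $\{\tfrac{N}{d+y}: y\in[a_2,a_2+1]\}$ tiles $[a_1,a_1+1]$ as $d$ runs over the digit set attached to $I_1$; the inverse branch sends $(x',y')\in I_1\times[a_1,a_1+1]$ back to $\big(\tfrac{N}{x'+d-a_1}\big)$-type preimages. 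Assembling these inverse branches shows $\overline{\mathcal{T}_T}$ is a.e. bijective on $X$. The commuting relation $\pi_1\circ\overline{\mathcal{T}_T}=T\circ\pi_1$ is immediate from the definition.

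Next I would check invariance of $\overline{\mu_T}$ by the change-of-variables/transfer-operator computation. On a single branch the map is $(x,y)\mapsto(u,v)$ with $u=\tfrac{N}{x}-d+a_k$, $v=\tfrac{N}{d+y}$, whose Jacobian is $\big|\tfrac{\partial u}{\partial x}\big|\cdot\big|\tfrac{\partial v}{\partial y}\big|=\tfrac{N}{x^2}\cdot\tfrac{N}{(d+y)^2}$. The claim reduces to the pointwise identity
\[
\sum_{d}\frac{1}{\frac{N}{x^2}\cdot\frac{N}{(d+y)^2}}\cdot\frac{N}{\big(N+u\,v\big)^2}\;=\;\frac{N}{(N+xy)^2},
\]
summed over the inverse branches landing at a given $(u,v)=(x',y')$; substituting $x=\tfrac{N}{x'+d-a_k}$ and $y=\tfrac{N}{v}-d=\tfrac{N(1-v\cdot\frac{d}{N})}{v}$… — more cleanly, one fixes the image point, parametrizes the preimages by $d$, and after simplification the density $\tfrac{N}{(N+xy)^2}\,dx\,dy$ is carried to $\tfrac{N}{(N+x'y')^2}\,dx'\,dy'$ branch by branch, the key algebraic cancellation being that $N+xy$ transforms multiplicatively in a way that matches the Jacobian factors. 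This is the computational heart of the proof and the step most prone to bookkeeping errors, so I would organize it by first treating one generic full branch and then noting the sum over $d$ telescopes to cover the whole fiber.

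Finally, I would compute the normalizing constant. Since $a_i\mid N$ and $(a_i+1)\mid N$, the integrals split over the two rectangles and one gets
\[
\iint_X \frac{N}{(N+xy)^2}\,dy\,dx
=\int_{a_1}^{a_1+1}\!\!\int_{a_2}^{a_2+1}\frac{N\,dy\,dx}{(N+xy)^2}
+\int_{a_2}^{a_2+1}\!\!\int_{a_1}^{a_1+1}\frac{N\,dy\,dx}{(N+xy)^2}.
\]
Each inner integral is elementary: $\int \tfrac{N\,dy}{(N+xy)^2}=\tfrac{-1}{x}\cdot\tfrac{1}{(N+xy)/N}\big|$ evaluates to a difference of logarithms after the outer $x$-integration, using $\int \tfrac{1}{x}\big(\tfrac{1}{N+a_2 x}-\tfrac{1}{N+(a_2+1)x}\big)dx$-type antiderivatives of the form $\ln\tfrac{N+a_2x}{x}$. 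Collecting terms, the two rectangles contribute equal values (by the $x\leftrightarrow y$ symmetry of the integrand together with the symmetry of $X$), which produces the factor $2$, and the logarithmic terms combine to $\ln\big[1+\tfrac{N}{(N+a_1(a_2+1))(N+a_2(a_1+1))}\big]$. The minimality of the natural extension is then automatic: $\bigvee_{n\ge 0}\overline{\mathcal{T}_T}^{\,n}\pi_1^{-1}\mathcal{B}_\Omega=\mathcal{B}_X$ because knowing $x$ and its full forward $T$-orbit determines all past digits, hence determines $y$. The main obstacle, as noted, is the invariance identity — everything else is routine once full branches are in hand.
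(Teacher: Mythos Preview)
Your proposal is correct and follows essentially the same route as the paper: verify a.e.\ bijectivity of $\overline{\mathcal{T}_T}$ by showing that the images of the cylinder-rectangles $\Delta(c_i)\times[a_2,a_2+1]$ tile $[a_2,a_2+1)\times[a_1,a_1+1]$ (and symmetrically for the other rectangle), check invariance of the density $\tfrac{N}{(N+xy)^2}$ via the Jacobian, and invoke a standard argument for the generating property of the $\sigma$-algebra. The paper is terser --- it defers the Jacobian and $\sigma$-algebra verifications to \cite{DKW13} and does not derive the normalizing constant --- whereas you spell these out.

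One small point: your transfer-operator display with a \emph{sum over $d$} is misleading, because $\overline{\mathcal{T}_T}$ is a bijection and each image point has exactly one preimage; there is nothing to sum or telescope. What you actually need (and essentially state in words) is the single-branch identity $\dfrac{N}{(N+uv)^2}\cdot\dfrac{N^2}{x^2(d+y)^2}=\dfrac{N}{(N+xy)^2}$, which follows from the clean factorization $N+uv=\dfrac{N(N+xy)}{x(d+y)}$. With that correction the invariance check is a one-line computation rather than a sum.
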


\begin{proof}
We first show that $\overline{\mathcal{T}_T}$ is bijective almost everywhere. Let $\{c_1,\ldots,c_m\}$ be the digit set on $(a_1,a_1+1]$ and $\{d_1,\ldots,d_k\}$ the digit set on  $(a_2,a_2+1]$. The case that the digit set is infinite is in analogy. For the rectangles of the form $\Delta(c_i)\times [a_2,a_2+1]$ we find  $\overline{\mathcal{T}_T}(\Delta(c_i)\times [a_2,a_2+1])=[a_2,a_2+1]\times [\frac{N}{c_i+a_2+1}, \frac{N}{c_i+a_2}]$. On these rectangles  $\overline{\mathcal{T}_T}$ is bijective. It is easy to check that these rectangles fill up $[a_2,a_2+1]\times [a_1,a_1+1]$ and only overlap on the edges. In the same way the images of the rectangles $\Delta(d_i)\times [a_1,a_1+1]$ fill up the rectangle  $[a_1,a_1+1]\times [a_2,a_2+1]$, see also Figure~\ref{fig:natex2int}. 

\begin{figure}[h]
\centering
\begin{tikzpicture}[scale=2]

\draw(1,3)[white]node[below, black]{\small $a_1$}node[left, black]{\small $a_2$}--(2,3)node[below, black]{\small $a_1+1$}--(2,4)--(1,4)node[left, black]{\small $a_2+1$}--(1,3);
\draw(1,3)rectangle(1.3,4)node[midway]{\tiny $\Delta(c_m)$};
\draw(1.3,3)rectangle(1.6,4)node[midway]{\tiny $\cdots$};
\draw(1.6,3)rectangle(2,4)node[midway]{\small $\Delta(c_1)$};

\draw(3,1)[white]node[below, black]{\small $a_2$}node[left, black]{\small $a_1$}--(4,1)node[below, black]{\small $a_2+1$}--(4,2)--(3,2)node[left, black]{\small $a_1+1$}--(3,1);
\draw(3,1)rectangle(3.3,2)node[midway]{\tiny $\Delta(d_k)$};
\draw(3.3,1)rectangle(3.6,2)node[midway]{\tiny $\cdots$};
\draw(3.6,1)rectangle(4,2)node[midway]{\small $\Delta(d_1)$};

\draw[-latex,line width=1mm] (3.5,2.5) -- (4.5,2.5)node[midway, above]{\large $\mathcal{T}_T$};

\draw(4.5,3)[white]node[below, black]{\small $a_1$}node[left, black]{\small $a_2$}--(5.5,3)node[below, black]{\small $a_1+1$}--(5.5,4)--(4.5,4)node[left, black]{\small $a_2+1$}--(4.5,3);
\draw(4.5,3)rectangle(5.5,3.3)node[midway]{\small $\mathcal{T}_T(\Delta(d_k))$};
\draw(4.5,3.3)rectangle(5.5,3.7)node[midway]{\small $\vdots$};
\draw(4.5,3.7)rectangle(5.5,4)node[midway]{\small $\mathcal{T}_T(\Delta(d_1))$};

\draw(6.5,1)[white]node[below, black]{\small $a_2$}node[left, black]{\small $a_1$}--(7.5,1)node[below, black]{\small $a_2+1$}--(7.5,2)--(6.5,2)node[left, black]{\small $a_1+1$}--(6.5,1);
\draw(6.5,1)rectangle(7.5,1.3)node[midway]{\small $\mathcal{T}_T(\Delta(c_m))$};
\draw(6.5,1.3)rectangle(7.5,1.7)node[midway]{\small $\vdots$};
\draw(6.5,1.7)rectangle(7.5,2)node[midway]{\small $\mathcal{T}_T(\Delta(c_1))$};

\end{tikzpicture}
\caption{The domain of the natural extension and its image under $\overline{\mathcal{T}_T}$ in the case of two intervals.}
\label{fig:natex2int}
\end{figure}
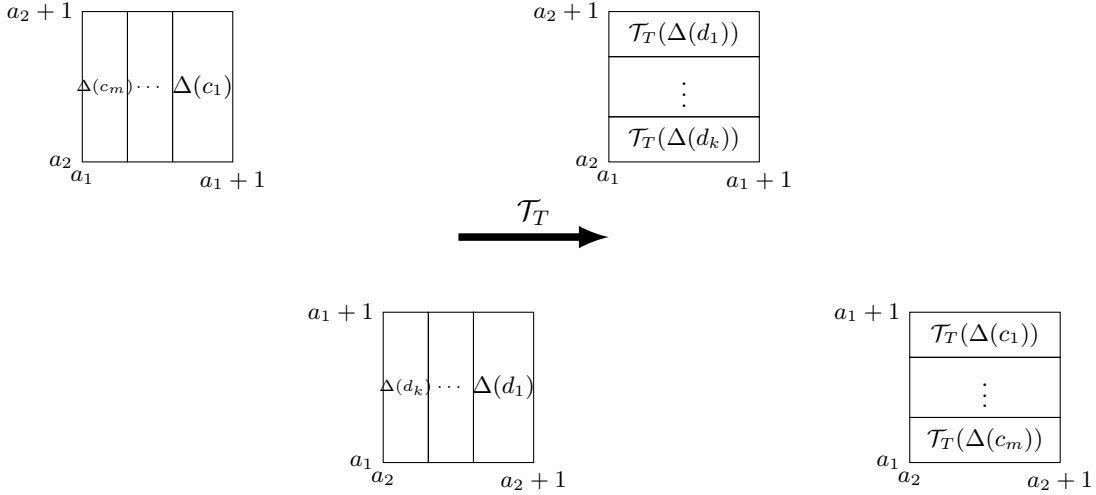
The proof that $\mathcal{B}_X=\bigvee_{i=0}^\infty \mathcal{T}^i_T(\mathcal{B}_\Omega \times \Omega )$ is in analogy with~\cite{DKW13}. Using Jacobians as in~\cite{DKW13} one finds that $\overline{\mu_T}(A)=C\iint\limits_A \frac{N}{(N+xy)^2} dy dx$ is indeed  $\overline{\mathcal{T}_T}$-invariant.
\end{proof}

It  is worth mentioning that dynamical systems for lazy $N$-expansions from~\cite{DKW13} are examples of such systems.  In Section \ref{othercases}, we show that for the non-simple but allowable case of two intervals, one is still able to build the domain of the natural extension, however we were not able to determine the density of the invariant measure.
An immediate consequence of Theorem~\ref{th:natex} is the following.
\begin{corollary}
Suppose $(T,\Omega, \mu_T, \mathcal{B}_\Omega)$
 is a simple system with $\Omega=[a_1,a_1+1)\cup [a_2,a_2+1)$ then the invariant measure $\mu_T$ is given by
\[
\mu_T(A)= C \int_A \left(\frac{a_2+1}{N+(a_2+1)x}-\frac{a_2}{N+a_2x}\right)\textbf{1}_{[a_1,a_1+1)} +\left(\frac{a_1+1}{N+(a_1+1)x}-\frac{a_1}{N+a_1x}\right)\textbf{1}_{[a_2,a_2+1)} dx
\]
for $A\in \mathcal{B}_\Omega$ and $C$ as in Theorem~\ref{th:natex}.
\end{corollary}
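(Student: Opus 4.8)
The plan is to derive $\mu_T$ as the marginal of the two-dimensional invariant measure $\overline{\mu_T}$ from Theorem~\ref{th:natex} by integrating out the fibre coordinate $y$. Since $(\overline{\mathcal{T}_T}, X, \overline{\mu_T}, \mathcal{B}_X)$ is the natural extension of $(T, \Omega, \mu_T, \mathcal{B}_\Omega)$, the push-forward of $\overline{\mu_T}$ under the canonical projection $\pi(x,y) = x$ is precisely $\mu_T$, and by uniqueness of the absolutely continuous invariant measure (the previous section) it suffices to verify that this marginal has the claimed density. So the first step is to fix the constant $C$ as in Theorem~\ref{th:natex} and write $\mu_T(A) = \overline{\mu_T}(\pi^{-1}A)$ for $A \in \mathcal{B}_\Omega$.

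Next I would split according to which of the two intervals $A$ meets. For $A \subseteq [a_1, a_1+1)$, the fibre over $x$ in $X$ is $\{x\} \times [a_2, a_2+1]$, so
\[
\mu_T(A) = C \int_A \left( \int_{a_2}^{a_2+1} \frac{N}{(N+xy)^2}\, dy \right) dx.
\]
The inner integral is elementary: an antiderivative of $N(N+xy)^{-2}$ in $y$ is $-\frac{1}{x}(N+xy)^{-1} \cdot N$, wait — more cleanly, $\int \frac{N}{(N+xy)^2} dy = -\frac{1}{x}\cdot\frac{N}{N+xy}$, evaluated from $y=a_2$ to $y=a_2+1$ this gives $\frac{1}{x}\left(\frac{N}{N+a_2x} - \frac{N}{N+(a_2+1)x}\right)$, which simplifies to $\frac{a_2+1}{N+(a_2+1)x} - \frac{a_2}{N+a_2x}$ after writing $\frac{N}{x(N+ax)} = \frac{1}{x} - \frac{a}{N+ax}$ and cancelling the $\frac1x$ terms. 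This is exactly the first bracket in the statement. The computation for $A \subseteq [a_2, a_2+1)$ is identical with the roles of $a_1$ and $a_2$ swapped, since the fibre over such $x$ is $\{x\}\times[a_1,a_1+1]$, giving the second bracket. Adding the two contributions via the indicator functions yields the displayed formula.

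Finally I would note that the normalization is automatic: integrating the claimed density over $\Omega$ reproduces $\iint_X \frac{N}{(N+xy)^2}\,dy\,dx = C^{-1}$ by Fubini, so $C$ is the same constant as in Theorem~\ref{th:natex}, consistent with the statement. There is essentially no obstacle here — the only thing to be slightly careful about is that the projection of a natural extension onto the base recovers the original invariant measure, which is standard, and that the fibres of $X$ over each base interval are the correct vertical segments, which one reads directly off the definition $X = [a_1,a_1+1)\times[a_2,a_2+1] \cup [a_2,a_2+1)\times[a_1,a_1+1]$. The partial-fraction step is the only computation, and it is routine.
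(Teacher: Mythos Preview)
Your proposal is correct and follows exactly the paper's approach: the paper simply states that the corollary is ``found by simply projecting down to the first coordinate,'' and you have carried out precisely that projection, including the elementary inner integral. There is nothing to add.
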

This is found by simply projecting down to the first coordinate. \newline

In the remainder of this section we will look at how well convergents of a number $x$ are approximating $x$. We will first look at the desirable case with two intervals. For $x\in\Omega$ we define:
 \begin{equation}
 \theta_n(x)=\frac{q_n^2}{\prod_{i=1}^n N_i}\left|x-\frac{p_n}{q_n}\right|.
 \end{equation} 
By writing $t_n=T^n(x)$ and $v_n=\frac{N_n q_{n-1}}{q_n}$ by using (\ref{eq:xminpkqk}) we see that we can write $ \theta_n(x)=\frac{N_nt_n}{N_n+t_n v_n}$.  We also find $ \theta_{n-1}(x)=\frac{N_nv_n}{N_n+t_n v_n}$. Furthermore, note that $\mathcal{T}^n(x,0)=(t_n,v_n)$. Now since $\mathcal{T}^n(x,0)\in I_1\times I_2 \cup I_2\times I_1$ for all $n$ we find the following estimates for $\theta_n(x)$:
\begin{eqnarray}
\frac{a_1 N_n}{N_n+a_1(a_2+1)} \leq \theta_n(x) \leq \frac{N_n(a_1+1)}{N_n+a_2(a_1+1)}\quad  \text{ if } t_n\in I_1,\label{ineq:thetha1}\\
\frac{a_2 N_n}{N_n+a_2(a_1+1)} \leq \theta_n(x) \leq \frac{N_n(a_2+1)}{N_n+a_1(a_2+1)}\quad  \text{ if } t_n\in I_2.\label{ineq:thetha2}
\end{eqnarray}
When we are in the simple case we can say more. We formulate a  Doeblin-Lenstra type theorem.
For $c\in\mathbb{R}$, let $A_c:= \{(x,y)\in X: \frac{Nx}{N+xy}\leq c\}$. 
\begin{theorem}
Let $F(c):=\overline{\mu_T}(A_c)$ and let $C$ be the normalizing constant. Then we have
\[
F(c)=\begin{cases}
0 & \text{for } c \in \left(-\infty,\frac{Na_1}{N+a_1(a_2+1)}\right]\\
 C \left( \frac{c(a_2+1+\frac{N}{a_1})}{N}-1-\log\left (\frac{Nc+ca_1(a_2+1)}{a_1N}\right)  \right) & \text{for } c\in \left(\frac{Na_1}{N+a_1(a_2+1)},\frac{Na_1}{N+a_1a_2}\right]\\
C  \left( - \log \left( \frac{N+a_1(a_2+1)}{N+a_1a_2} \right)+ \frac{c}{N} \right) & \text{for } c\in \left(\frac{Na_1}{N+a_1a_2},\frac{N(a_1+1)}{N+(a_1+1)(a_2+1)}\right]\\
C  \left(\log \left( \frac{Nc+c(a_1+1)(a_2+1)}{N(a_1+1)}\frac{N+a_1a_2}{N+a_1(a_2+1)}  \right)+ 1-\frac{c}{a_1+1}-\frac{ca_2}{N}\right) & \text{for } c\in \left(\frac{N(a_1+1)}{N+(a_1+1)(a_2+1)},\frac{N(a_1+1)}{N+a_2(a_1+1)}\right]\\
\frac{1}{2} & \text{for } c\in \left(\frac{N(a_1+1)}{N+(a_1+1)(a_2+1)},\frac{Na_2}{N+a_2(a_1+1)}\right]\\
\frac{1}{2}+C\left( \frac{c(a_1+1+\frac{N}{a_2})}{N}-1-\log\left (\frac{Nc+ca_2(a_1+1)}{a_2N}\right)  \right) & \text{for } c\in \left(\frac{Na_2}{N+a_2(a_1+1)},\frac{Na_2}{N+a_1a_2}\right]\\
\frac{1}{2} +C  \left( - \log \left( \frac{N+a_2(a_1+1)}{N+a_1a_2} \right)+ \frac{c}{N} \right) & \text{for } c\in \left(\frac{Na_2}{N+a_1a_2},\frac{N(a_2+1)}{N+(a_1+1)(a_2+1)}\right]\\
 \frac{1}{2} + C \left( \log \left( \frac{Nc+c(a_2+1)(a_1+1)}{N(a_2+1)}\frac{N+a_1a_2}{N+a_2(a_1+1)}  \right)+ 1-\frac{c}{a_2+1}-\frac{ca_1}{N}  \right) & \text{for } c\in \left(\frac{N(a_2+1)}{N+(a_1+1)(a_2+1)},\frac{N(a_2+1)}{N+a_1(a_2+1)}\right]\\
1 & \text{for } c\in \left(\frac{N(a_2+1)}{N+a_1(a_2+1)},\infty\right)\\
\end{cases}
\]
\end{theorem}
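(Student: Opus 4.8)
The plan is to compute $F(c)=\overline{\mu_T}(A_c)$ directly from the explicit density $\frac{N}{(N+xy)^2}$ of Theorem~\ref{th:natex}, by identifying the sublevel sets of $g(x,y):=\frac{Nx}{N+xy}$ inside $X$ and integrating. First I would exploit the symmetry of the problem. Write $X=X_1\cup X_2$ with $X_1=[a_1,a_1+1)\times[a_2,a_2+1]$ and $X_2=[a_2,a_2+1)\times[a_1,a_1+1]$. Since $N_i\equiv N$, both the density and $X$ are invariant under swapping the two coordinates and simultaneously $a_1\leftrightarrow a_2$; hence $\overline{\mu_T}(X_1)=\overline{\mu_T}(X_2)=\frac12$ and $\overline{\mu_T}(A_c\cap X_2)$ is obtained from $\overline{\mu_T}(A_c\cap X_1)$ by the substitution $a_1\leftrightarrow a_2$. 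Relabelling if necessary we may assume $a_1<a_2$, so $a_1+1\le a_2$ and $\max_{X_1}g=\frac{N(a_1+1)}{N+a_2(a_1+1)}\le\min_{X_2}g=\frac{Na_2}{N+a_2(a_1+1)}$; on the gap between these values $A_c\cap X_1=X_1$ while $A_c\cap X_2=\varnothing$, which is the plateau $F\equiv\frac12$. So it remains to compute $G(c):=\overline{\mu_T}(A_c\cap X_1)$, after which $F(c)$ is $G(c)$ plus its $a_1\leftrightarrow a_2$ image, each summand capped at $\frac12$ once $c$ passes the corresponding maximum.

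For fixed $x$ the map $y\mapsto g(x,y)$ is strictly decreasing, so $A_c\cap X_1=\{(x,y)\in X_1:\ y\ge\phi_c(x)\}$ with $\phi_c(x)=\frac{N}{c}-\frac{N}{x}$ strictly increasing in $x$; the crucial identity is $N+x\,\phi_c(x)=\frac{Nx}{c}$, which makes all the integrals telescope. As $c$ increases from $\min_{X_1}g=g(a_1,a_2+1)$, the increasing curve $y=\phi_c(x)$ sweeps across the rectangle $\overline{X_1}$ and successively meets the side-pairs $\{x=a_1\},\{y=a_2+1\}$; then $\{y=a_2\},\{y=a_2+1\}$; then $\{y=a_2\},\{x=a_1+1\}$; and finally leaves $\overline{X_1}$. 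Solving $\phi_c(\cdot)$ equal to a corner shows these transitions occur precisely at the corner values $g(a_1,a_2)=\frac{Na_1}{N+a_1a_2}$, $g(a_1+1,a_2+1)=\frac{N(a_1+1)}{N+(a_1+1)(a_2+1)}$, $g(a_1+1,a_2)=\frac{N(a_1+1)}{N+a_2(a_1+1)}$, i.e.\ exactly the first break points in the statement (when two corner values coincide, e.g.\ if $N=a_1(a_1+1)$ or $a_1=0$, the intervening regime is empty and the formula is recovered by continuity).

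In each of the four regimes I would integrate the density first in $y$, then in $x$, using only two antiderivatives. On a full strip $a_2\le y\le a_2+1$,
\[
\int_{a_2}^{a_2+1}\frac{N\,dy}{(N+xy)^2}=\frac{N}{(N+a_2x)(N+(a_2+1)x)},\qquad \int\frac{N\,dx}{(N+a_2x)(N+(a_2+1)x)}=\log\frac{N+(a_2+1)x}{N+a_2x},
\]
while on a strip $\phi_c(x)\le y\le a_2+1$ below the curve, using $N+x\phi_c(x)=Nx/c$ and partial fractions,
\[
\int_{\phi_c(x)}^{a_2+1}\frac{N\,dy}{(N+xy)^2}=\frac{c}{x^2}-\frac1x+\frac{a_2+1}{N+(a_2+1)x},\qquad \int\Big(\frac{c}{x^2}-\frac1x+\frac{a_2+1}{N+(a_2+1)x}\Big)dx=-\frac{c}{x}+\log\Big(\frac{N}{x}+a_2+1\Big).
\]
Evaluating these between the relevant $x$-limits — namely $a_1$, $a_1+1$, and the solutions $x_0(c)=\frac{Nc}{N-ca_2}$, $x_1(c)=\frac{Nc}{N-c(a_2+1)}$ of $\phi_c(x)=a_2$ and $\phi_c(x)=a_2+1$ — and simplifying (the values of $N+(a_2+1)x$ etc.\ at $x_0,x_1$ are again multiples of $N+c$ and $N$, so the logs and $c/x$ terms collapse) produces exactly the four closed forms listed for $G(c)$. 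Adding the $a_1\leftrightarrow a_2$ copy, which contributes nothing until $c$ reaches $\min_{X_2}g$ and then reproduces the same four expressions shifted by $\frac12$, yields $F(c)$.

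I expect no conceptual obstacle: the content is just the symmetry reduction plus elementary integration. The genuinely delicate part will be the bookkeeping — checking that the four regime transitions land exactly on the stated break points and, above all, that the elementary antiderivatives evaluated at $x_0(c)$ and $x_1(c)$ simplify to the precise logarithmic expressions in the statement; this must be done carefully regime by regime, and the degenerate configurations ($a_1=0$, or coincident corner values of $g$ on $\overline{X_1}$) should be checked separately to confirm the formula persists by continuity.
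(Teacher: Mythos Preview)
Your proposal is correct and follows essentially the same route as the paper: compute $\overline{\mu_T}(A_c)$ by direct integration of the explicit density $\frac{N}{(N+xy)^2}$ over the sublevel set, splitting into regimes according to how the level curve of $g$ crosses the rectangle, and invoking the $a_1\leftrightarrow a_2$ symmetry to get cases~6--8 from 2--4. The only cosmetic difference is the order of integration---you integrate in $y$ first (using $\phi_c(x)=N/c-N/x$ and the identity $N+x\phi_c(x)=Nx/c$), whereas the paper integrates in $x$ first (using the inverse curve $x=\frac{Nc}{N-cy}$ and the equivalent simplification $\frac{Nc/(N-cy)}{N+(Nc/(N-cy))y}=\frac{c}{N}$)---but the same telescoping happens either way and the resulting closed forms agree.
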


\begin{proof}
An argument similar to the one given by Jager (\cite{DK2002} Lemma 5.3.11) shows that for almost all $x$, the orbit of $(x,0)$ under $\overline{{\mathcal T}_T}$ is a generic point under $\overline{\mu_T}$, hence, by 
the ergodic theorem, we have for almost all $x$,
\[
\lim_{n\rightarrow\infty} \frac{1}{n}\{1\leq i\leq n : \theta_i(x)\leq c\}=\lim_{n\to\infty}\frac{1}{n}\sum_{i=0}^n {\mathbb I}_{A_c}({\overline{\mathcal T}_T}^i(x,0))=\overline{\mu_T}(A_c).
\]
The rest of the proof consists of calculating a list of  integrals. Figure~\ref{fig:DLbounds} illustrates where the formula of $F(c)$ changes. Note that in Figure~\ref{fig:DLbounds}  we have $
\frac{Na_1}{N+a_1a_2}=\frac{N(a_1+1)}{N+(a_1+1)(a_2+1)}$. This only happens when $N=a_2(a_2+1)$. The minimal $N$ for a chosen $\Omega$ to make the system simple is at least equal to $a_2(a_2+1)$.

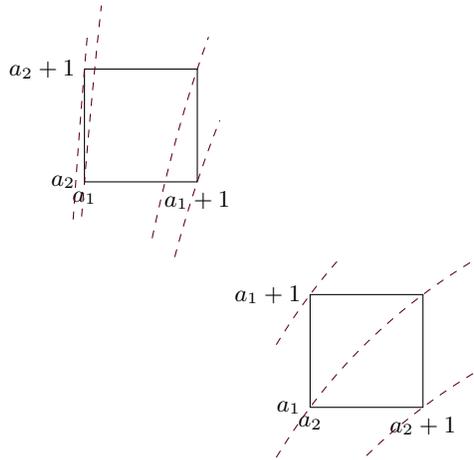
\begin{figure}[h]
\centering
\begin{tikzpicture}[scale=1.5]

\draw(1,3)node[below, black]{\small $a_1$}node[left, black]{\small $a_2$}--(2,3)node[below, black]{\small $a_1+1$}--(2,4)--(1,4)node[left, black]{\small $a_2+1$}--(1,3);
\draw [dashed, purple!50!black, smooth, samples =20, domain=0.9:1.025] plot(\x,{16 -12/\x)});
\draw [dashed, purple!50!black, smooth, samples =20, domain=0.975:1.15] plot(\x,{15 -12/\x)});
\draw [dashed, purple!50!black, smooth, samples =20, domain=1.6:2.1] plot(\x,{10 -12/\x)});
\draw [dashed, purple!50!black, smooth, samples =20, domain=1.8:2.2] plot(\x,{9 -12/\x)});

\draw(3,1)node[below, black]{\small $a_2$}node[left, black]{\small $a_1$}--(4,1)node[below, black]{\small $a_2+1$}--(4,2)--(3,2)node[left, black]{\small $a_1+1$}--(3,1);

\draw [dashed, purple!50!black, smooth, samples =20, domain=2.7:3.25] plot(\x,{6 -12/\x)});
\draw [dashed, purple!50!black, smooth, samples =20, domain=2.7:4.5] plot(\x,{5 -12/\x)});
\draw [dashed, purple!50!black, smooth, samples =20, domain=3.5:4.5] plot(\x,{4 -12/\x)});

\end{tikzpicture}
\caption{The domain of the natural extension with dashed curves of the form $\frac{Nx}{N+xy}=c$ for the values where the formula for $F(c)$ changes.}
\label{fig:DLbounds}
\end{figure}
\noindent 1) For $ c \in \left(-\infty,\frac{Na_1}{N+a_1(a_2+1)}\right]$ we know that $A_c=\emptyset$ so $F(c)=0$.\\
2) For $c\in \left(\frac{Na_1}{N+a_1(a_2+1)},\frac{Na_1}{N+a_1a_2}\right]$:
\begin{eqnarray*}
 F(c)&=&
 C\int \displaylimits_{\frac{N(a_1-c)}{ca_1}}^{a_2+1} \int \displaylimits_{a_1}^{\frac{Nc}{N-cy}} \frac{N}{(N+xy)^2} dx dy =
 C\int \displaylimits_{\frac{N(a_1-c)}{ca_1}}^{a_2+1}  \frac{\frac{Nc}{N-cy}}{N+(\frac{Nc}{N-cy})y}-  \frac{a_1}{N+a_1y}   dy\\
 &=&
 C\int \displaylimits_{\frac{N(a_1-c)}{ca_1}}^{a_2+1}  \frac{Nc}{N(N-cy)+Ncy}-  \frac{a_1}{N+a_1y}   dy=
 C\int \displaylimits_{\frac{N(a_1-c)}{ca_1}}^{a_2+1}  \frac{c}{N}-\frac{a_1}{N+a_1y}   dy\\
&=&
 C \left(  \frac{c(a_2+1)}{N}- \frac{c(\frac{N(a_1-c)}{ca_1})}{N}-\log\left( \frac{N+a_1(a_2+1)}{N+a_1\frac{N(a_1-c)}{ca_1}}\right) \right)\\
 &=&
 C \left( \frac{c(a_2+1+\frac{N}{a_1})}{N}-1-\log\left (\frac{Nc+ca_1(a_2+1)}{a_1N}\right)  \right).\\
\end{eqnarray*}
3) If $N=a_2(a_2+1)$ then $\frac{Na_1}{N+a_1a_2}= \frac{N(a_1+1)}{N+(a_1+1)(a_2+1)} $. Else, for $c\in \left(\frac{Na_1}{N+a_1a_2},\frac{N(a_1+1)}{N+(a_1+1)(a_2+1)}\right]$:

\begin{eqnarray*}
F(c)&=&C\int \displaylimits_{a_2}^{a_2+1} \int \displaylimits_{a_1}^{\frac{Nc}{N-ca_2}} \frac{N}{(N+xy)^2} dx dy + C\int \displaylimits_{a_2}^{a_2+1} \int \displaylimits_{\frac{Nc}{N-ca_2}}^{\frac{Nc}{N-cy}} \frac{N}{(N+xy)^2} dx dy \\
&=&
C\int \displaylimits_{a_2}^{a_2+1} \frac{(\frac{Nc}{N-ca_2})}{N+(\frac{Nc}{N-ca_2})y}-\frac{a_1}{N+a_1y}  +   \frac{(\frac{Nc}{N-cy})}{N+(\frac{Nc}{N-cy})y}-\frac{(\frac{Nc}{N-ca_2})}{N+(\frac{Nc}{N-ca_2})y} dy\\
&=&
C\int \displaylimits_{a_2}^{a_2+1} \frac{Nc}{(N-ca_2)N+Ncy}-\frac{a_1}{N+a_1y}  +  \frac{Nc}{(N-cy)N+Ncy}-\frac{Nc}{(N-ca_2)N+Ncy} dy\\
&=& 
C  \left( - \log (N+a_1y) + \frac{cy}{N} |_{a_2}^{a_2+1}     \right)= C  \left( - \log \left( \frac{N+a_1(a_2+1)}{N+a_1a_2} \right)+ \frac{c}{N} \right).\\
\end{eqnarray*}
4) For $c\in \left(\frac{N(a_1+1)}{N+(a_1+1)(a_2+1)},\frac{N(a_1+1)}{N+a_2(a_1+1)}\right]$:
\begin{eqnarray*}
F(c)&=& C\int \displaylimits_{\frac{N(a_1+1-c)}{c(a_1+1)}}^{a_2+1} \int \displaylimits_{a_1}^{a_1+1} \frac{N}{(N+xy)^2} dx dy +C\int \displaylimits_{a_2}^{\frac{N(a_1+1-c)}{c(a_1+1)}} \int \displaylimits_{a_1}^{\frac{Nc}{N-cy}} \frac{N}{(N+xy)^2} dx dy \\
&=&
C\int \displaylimits_{\frac{N(a_1+1-c)}{c(a_1+1)}}^{a_2+1} \frac{a_1+1}{N+(a_1+1)y}-\frac{a_1}{N+a_1y} dy +C\int \displaylimits_{a_2}^{\frac{N(a_1+1-c)}{c(a_1+1)}} \frac{\frac{Nc}{N-cy}}{N+(\frac{Nc}{N-cy})y}-\frac{a_1}{N+a_1y} dy\\
&=&
C  \left( \log \left( \frac{N+(a_1+1)(a_2+1)}{N+(a_1+1)(\frac{N(a_1+1-c)}{c(a_1+1)})} \right)-\log \left( \frac{N+a_1(a_2+1)}{N+a_1\frac{N(a_1+1-c)}{c(a_1+1)}} \right)\right.\\ 
&+&
\left.\frac{c}{N}(\frac{N(a_1+1-c)}{c(a_1+1)}-a_2) -\log \left( \frac{N+a_1\frac{N(a_1+1-c)}{c(a_1+1)}}{N+a_1a_2} \right)\right)\\
\end{eqnarray*}

\begin{eqnarray*}
&=&
C  \left(\log \left( \frac{Nc+c(a_1+1)(a_2+1)}{N(a_1+1)}\frac{N+a_1a_2}{N+a_1(a_2+1)}  \right)+ 1-\frac{c}{a_1+1}-\frac{ca_2}{N}\right). \\
\end{eqnarray*}
5) For $c\in \left(\frac{N(a_1+1)}{N+(a_1+1)(a_2+1)},\frac{Na_2}{N+a_2(a_1+1)}\right]$ we know that $A_c=[a_1,a_1+1)\times [a_2,a_2+1]$ so that $F(c)=\frac{1}{2}$.\\
6) For $ c\in \left(\frac{Na_2}{N+a_2(a_1+1)},\frac{Na_2}{N+a_1a_2}\right]$ we have the same calculation as for (2) but then the roles of $a_1$ and $a_2$ are swapped. Therefore we find:
\[
 F(c)=\frac{1}{2} +C\int \displaylimits_{\frac{N(a_2-c)}{ca_2}}^{a_1+1} \int \displaylimits_{a_2}^{\frac{Nc}{N-cy}} \frac{N}{(N+xy)^2} dx dy  =\frac{1}{2}+C\left( \frac{c(a_1+1+\frac{N}{a_2})}{N}-1-\log\left (\frac{Nc+ca_2(a_1+1)}{a_2N}\right)  \right).
\]
7) For $  c\in \left(\frac{Na_2}{N+a_1a_2},\frac{N(a_2+1)}{N+(a_1+1)(a_2+1)}\right]$ we have the same calculation as for (3) but then the roles of $a_1$ and $a_2$ are swapped. Therefore we find:
\begin{eqnarray*}
F(c)&=&\frac{1}{2} +C\int \displaylimits_{a_1}^{a_1+1} \int \displaylimits_{a_2}^{\frac{Nc}{N-ca_1}} \frac{N}{(N+xy)^2} dx dy + C\int \displaylimits_{a_1}^{a_1+1} \int \displaylimits_{\frac{Nc}{N-ca_1}}^{\frac{Nc}{N-cy}} \frac{N}{(N+xy)^2} dx dy\\
&=&\frac{1}{2} +C  \left( - \log \left( \frac{N+a_2(a_1+1)}{N+a_1a_2} \right)+ \frac{c}{N} \right).
\end{eqnarray*}

8) For $ c\in \left(\frac{N(a_2+1)}{N+(a_1+1)(a_2+1)},\frac{N(a_2+1)}{N+a_1(a_2+1)}\right]$  we have the same calculation as for (4) but then the roles of $a_1$ and $a_2$ are swapped. Therefore we find:
\begin{eqnarray*}
F(c)&=&\frac{1}{2} +C\int \displaylimits_{\frac{N(a_2+1-c)}{c(a_2+1)}}^{a_1+1} \int \displaylimits_{a_2}^{a_2+1} \frac{N}{(N+xy)^2} dx dy + C\int \displaylimits_{a_1}^{\frac{N(a_2+1-c)}{c(a_2+1)}} \int \displaylimits_{a_2}^{\frac{Nc}{N-cy}} \frac{N}{(N+xy)^2} dx dy  \\
&=&
 \frac{1}{2} + C \left( \log \left( \frac{Nc+c(a_2+1)(a_1+1)}{N(a_2+1)}\frac{N+a_1a_2}{N+a_2(a_1+1)}  \right)+ 1-\frac{c}{a_2+1}-\frac{ca_1}{N}  \right).
\end{eqnarray*}
9) For $ c\in \left(\frac{N(a_2+1)}{N+a_1(a_2+1)},\infty\right)$ we have $A_c=X$ so that $F(c)=1$.
\end{proof}

\section{Other cases}\label{othercases}
In this section we shed a light on the cases where we did not find the natural extension. We will highlight where the difficulties lie. In the desirable but non-simple case we do not know the density. Calculations show that in the case of two intervals, whenever $N_1\neq N_2$, a measure with density of the form $\frac{K}{(K+xy)^2}$ for some $K\in \mathbb{R}$ is never invariant. Therefore, there is no easy candidate for the invariant measure. We do like to point out that the domain as in Theorem \ref{th:natex} is still Lebesgue almost bijective in case there are two intervals. Let us now focus on the simple case on more than two intervals. We have the following conjecture which we support by numerical analyses. We will show that the algorithm we used converges for two intervals. Furthermore, we will compare the results of the algorithm with a known and an unknown density.

\begin{conjecture}\label{con:natex}
Let $(T,\Omega,\mu_T,\mathcal{B}_\Omega)$ be a simple system that is ergodic with $\mu_T$ an a.c.i.m., and $\mathcal{B}_\Omega$ the Borel $\sigma$-algebra on $\Omega$. Let  $\mathcal{T}_T:\Omega \times [0,\infty) \rightarrow \Omega \times [0,\infty) $ be defined as 
\[
\mathcal{T}_T(x,y)=\left(T(x),\frac{N}{d_1(x)+y}\right).
\]
Now let $X=\cap_{i=0}^\infty \mathcal{T}_T^i(\Omega \times [0,\infty) )$. Then 
\[
\overline{\mathcal{T}_T}:=\mathcal{T}_T|_X
\]
is bijective Lebesgue almost everywhere and $(\overline{\mathcal{T}_T},X,\overline{\mu_T},\mathcal{B}_X)$ is the natural extension of  $(T,\Omega,\mu_T, \mathcal{B}_\Omega )$ where $\overline{\mu_T}$ is an invariant measure that is absolutely continuous with respect to the two dimensional Lebesgue measure given by
\[
\overline{\mu_T}(A)=C\iint\limits_A  \frac{N}{(N+xy)^2} dy dx
\] 
with $A\in \mathcal{B}_X$ where $\mathcal{B}_X$  is the Borel $\sigma$-algebra restricted to $X$ and $C$ a normalising constant.
\end{conjecture}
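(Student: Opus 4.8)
The plan is to follow the same structure as the proof of Theorem~\ref{th:natex}, treating the three claims---bijectivity of $\overline{\mathcal{T}_T}$ on $X$, the invariance of the measure $\overline{\mu_T}$, and the fact that $(\overline{\mathcal{T}_T},X,\overline{\mu_T},\mathcal{B}_X)$ is genuinely the natural extension---separately. First I would record the structural facts that do not depend on knowing $X$ explicitly: since the system is simple, on each interval $I_i$ the map $T$ has full branches $\Delta(d)$ with $T(\Delta(d))=I_{(i\bmod m)+1}$, and on the fibre coordinate $\overline{\mathcal{T}_T}$ acts by the M\"obius map $y\mapsto N/(d+y)$, which is injective and orientation-reversing. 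Hence for a fixed digit $d$ on $I_i$, the rectangle $\Delta(d)\times J$ is mapped bijectively onto $I_{(i\bmod m)+1}\times \bigl[\tfrac{N}{d+\sup J},\tfrac{N}{d+\inf J}\bigr]$; as $d$ ranges over the (finite or countable) digit set on $I_i$ these image-intervals in the $y$-coordinate tile an interval without overlap except on endpoints. This is exactly the cylinder-matching computation of Theorem~\ref{th:natex}, now carried out one interval at a time. The measure-invariance claim is then purely a Jacobian check: with $\overline{\mathcal{T}_T}(x,y)=(T(x),N/(d_1(x)+y))$ one has, on each branch, $|\!\det D\overline{\mathcal{T}_T}(x,y)|=\frac{N}{x^2}\cdot\frac{N}{(d_1(x)+y)^2}$, and substituting $T(x)=N/x-d_1(x)$ one verifies $\frac{N}{(N+T(x)\cdot\frac{N}{d_1(x)+y})^2}\cdot|\!\det D\overline{\mathcal{T}_T}|=\frac{N}{(N+xy)^2}$, so the density $\frac{N}{(N+xy)^2}$ is preserved by the transfer operator exactly as in~\cite{DKW13}. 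Note that this last computation does not use simplicity or the two-interval restriction at all, so the only genuinely new content of the conjecture is that the set $X=\bigcap_{i\geq 0}\mathcal{T}_T^i(\Omega\times[0,\infty))$ has positive (two-dimensional) Lebesgue measure and that $\overline{\mathcal{T}_T}=\mathcal{T}_T|_X$ is surjective onto $X$.

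Accordingly, the heart of the proof would be to show $\lambda_2(X)>0$ and that $\overline{\mathcal{T}_T}(X)=X$ up to measure zero. For surjectivity one argues, as is standard for natural extensions built as decreasing intersections, that $\mathcal{T}_T(\Omega\times[0,\infty))\supseteq X$ fails in general but $\mathcal{T}_T\bigl(\bigcap_{i=0}^{n}\mathcal{T}_T^i(\Omega\times[0,\infty))\bigr)=\bigcap_{i=1}^{n+1}\mathcal{T}_T^i(\Omega\times[0,\infty))$ for each $n$, so taking the intersection and using that the sets are nested and that $\mathcal{T}_T$ maps each ``approximating'' region onto the next, one gets $\mathcal{T}_T(X)=X$; combined with fibrewise injectivity this gives bijectivity a.e. on $X$. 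The positivity of $\lambda_2(X)$ is where I expect to need the explicit candidate domain. The natural strategy is to exhibit an explicit set $\widetilde{X}\subseteq\Omega\times[0,\infty)$ (the conjectured domain, a finite union of rectangles in the two-interval case, something more intricate for $m>2$) that is forward invariant, $\mathcal{T}_T(\widetilde X)=\widetilde X$, contains a nonempty open set, and is contained in every $\mathcal{T}_T^i(\Omega\times[0,\infty))$; then $\widetilde X\subseteq X$ forces $\lambda_2(X)>0$. Dually one would want $X\subseteq\widetilde X$, which follows once one checks $\mathcal{T}_T^i(\Omega\times[0,\infty))\subseteq\widetilde X$ for $i$ large, i.e. that all fibres above each point $x$ collapse into the prescribed $y$-range after finitely many steps. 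This latter collapse is the same phenomenon as the contraction estimate $|(T^k)'|>4$ from Section~\ref{sec:ergodic} transported to the $y$-coordinate: since $y\mapsto N/(d+y)$ is a uniform contraction of $[0,\infty)$ once composed $m$ times (because the digits are bounded below on each interval in the simple case), the vertical fibres shrink geometrically and their limiting positions are pinned down by the periodic digit strings, which is precisely what the conjectured $\widetilde X$ encodes.

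Finally, for the assertion that this is the natural extension one must verify $\mathcal{B}_X=\bigvee_{i=0}^{\infty}\overline{\mathcal{T}_T}^{\,i}(\mathcal{B}_\Omega\times X)$ (equivalently, that $\overline{\mathcal{T}_T}$ is the smallest invertible extension), which goes through exactly as in~\cite{DKW13} and Theorem~\ref{th:natex}: the past coordinate $y$ is a function of the digit sequence, the cylinders $\Delta(d_1,\ldots,d_n)\times X$ generate $\mathcal{B}_\Omega\times X$, their $\overline{\mathcal{T}_T}$-images are rectangles whose $y$-sides shrink to points, and a standard argument shows these generate the full Borel $\sigma$-algebra on $X$. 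Uniqueness and absolute continuity of $\overline{\mu_T}$ then come from ergodicity of $T$ (Section~\ref{sec:ergodic}), which lifts to ergodicity of the natural extension.

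The main obstacle, and the reason this is stated as a conjecture rather than a theorem, is the positivity $\lambda_2(X)>0$ together with the identification $X=\widetilde X$: for $m\geq 3$ the authors were unable to produce a closed-form forward-invariant domain $\widetilde X$ and verify $\mathcal{T}_T(\widetilde X)=\widetilde X$, because the images of the rectangles $\Delta(d)\times(\text{fibre range})$ under the periodic composition no longer reassemble into a manageable region---the fibre endpoints involve infinitely many periodic continued-fraction values and the resulting domain is not a finite union of rectangles. So the realistic proof plan reduces the conjecture to the single geometric statement ``the conjectured $\widetilde X$ satisfies $\mathcal{T}_T(\widetilde X)=\widetilde X$ and has nonempty interior,'' and it is exactly this statement that the simulations in the next section are designed to support.
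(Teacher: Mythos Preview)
Your overall structure matches the paper's partial proof: the Jacobian computation for invariance of $\overline{\mu_T}$, the nested-intersection argument for surjectivity $\mathcal{T}_T(X)=X$, and the $\sigma$-algebra verification via cylinders all proceed as you outline and as in~\cite{DKW13}. You also correctly locate the gap at $\lambda_2(X)>0$.

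Two points where your write-up diverges from, or is weaker than, the paper's argument. First, for injectivity the paper does \emph{not} rely on ``fibrewise injectivity plus surjectivity''; instead it gives a self-contained measure-theoretic argument that works without knowing the shape of $X$. Assuming surjectivity and invariance of $\overline{\mu_T}$ (both of which you have), let $Y\subset X$ be the set of points with at least two $\overline{\mathcal{T}_T}$-preimages, split $\overline{\mathcal{T}_T}^{-1}(Y)=Z_1\cup Z_2$ so that $\overline{\mathcal{T}_T}|_{X\setminus Z_2}$ is a bijection onto $X$, and compute $\overline{\mu_T}(X)=\overline{\mu_T}(\overline{\mathcal{T}_T}(Z_2))+\overline{\mu_T}(\overline{\mathcal{T}_T}(X\setminus Z_2))=\overline{\mu_T}(Y)+\overline{\mu_T}(X)$, whence $\overline{\mu_T}(Y)=0$. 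This is cleaner than your sketch because it avoids having to check that images of distinct cylinders are disjoint in $X$, which for $m\ge 3$ is not obvious from the geometry.

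Second, and more importantly, your contraction heuristic for positivity is misleading. Each individual composition $y\mapsto N/(d_m+\cdots+N/(d_1+y))$ does contract, but that only says each branch of the associated iterated function system has an attracting fixed point; it says nothing about whether the \emph{union} of all branch-images fills an interval or collapses to a measure-zero Cantor set. The paper phrases the obstruction more sharply: the fibre of $X$ above $I_1$ consists of those $y$ expressible as $[N/d_1,N/d_2,\ldots]$ with $d_1$ drawn from the digit set on $I_m$, $d_2$ from that on $I_{m-1}$, and so on---i.e.\ the interval order is \emph{reversed}. For $m=2$ the reversed cycle coincides (up to a shift) with the forward cycle, which is precisely why Theorem~\ref{th:natex} goes through and why the paper can verify directly that the successive $y$-intervals keep length $>1$. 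For $m\ge 3$ the reversed cycle is a genuinely different symbolic constraint, not arising from any map of the form~$T$ on $\Omega$, and there is no a~priori reason its image should contain an interval. Your proposed route via an explicit forward-invariant $\widetilde X$ would of course settle this, but it is equivalent to the paper's reformulation rather than a way around it; the contraction estimate $|(T^k)'|>4$ from Section~\ref{sec:ergodic} does not transport to this question.
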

\begin{proof}[partial proof]
Let $\mu_T^*(A)$ be $\overline{\mu_T}(A)$ but on the Borel $\sigma$-algebra restricted to $\Omega \times [0,\infty)$. We first want to show that 
\[
0<\mu_T^*(X)<\infty.
\]
For $\mu_T^*(X)<\infty$ note that $\mathcal{T}_T\left(\Omega \times [0,\infty)
\right)=I_1\times [0,\frac{N}{d_1(a_m)}) \cup \cup_{i=2}^m (I_i \times [0,\frac{N}{d_1(a_{i-1})})$ so that $\mu_T^*\left( \mathcal{T}_T(\Omega \times [0,\infty)
 \right)<\infty$. Since $X\subset \mathcal{T}_T\left(\Omega \times [0,\infty)
\right)$ we find $\mu_T^*(X)<\infty$. For $0<\mu_T^*(X)$ we want to show that $X$ contains a rectangle. Without loss of generality we can assume this rectangle is a subset of $I_1\times [0,\infty)$. The line ${y}\times I_1$ is contained in $X$ if we can write $y=[0;d_1,d_2,\ldots]_N$ where $d_1$ comes from the digit set corresponding to the interval $I_m$, $d_2$ from the digit set corresponding to the interval $I_{m-1}$ etc.. Note that the order of possible digits is reversed! The question whether $X$ contains a rectangle now translates to whether the set of all such $y$'s contains an interval. \textbf{This is where the gap in the proof is as we were unable to prove that $X$ has positive Lebesgue measure.}

\vspace{1em}

For surjectivity of $\overline{\mathcal{T}_T}$. Since $\mathcal{T}_T(\Omega \times [0,\infty) )\subset  \Omega \times [0,\infty)$ we have that $\mathcal{T}_T(X)=\cap_{i=1}^\infty \mathcal{T}^i(\Omega \times [0,\infty) )=\cap_{i=0}^\infty \mathcal{T}^i(\Omega \times [0,\infty) )=X$.  We prove injectivity using contradiction. Suppose $\overline{\mathcal{T}_T}$ is not injective on $X$. Since  $\overline{\mathcal{T}_T}$ is surjective we  have that for every $(x_2,y_2)\in X$ there is at least $(x_1,y_1)\in X$  such that $ \overline{\mathcal{T}_T}(x_1,y_1)=(x_2,y_2)$. Now let $Y$ be the set of $(x_2,y_2)\in X$ such that there are at least two pairs of originals. Let $Z=\mathcal{T}^{-1}_T(Y)$. We can find $Z_1, Z_2$ such that $Z_1\cup Z_2=Z, \, Z_1 \cap Z_2=\emptyset$ and $\mathcal{T}_T|_{X\backslash Z_2}$ is bijective. Now we use that $\overline{\mu_T}$ is an invariant measure.
\[
\overline{\mu_T}(X)=\overline{\mu_T}(Z_2\cup (X\backslash Z_2 ))=\overline{\mu_T}(\overline{\mathcal{T}_T}( Z_2 )+\overline{\mu_T}(\overline{\mathcal{T}_T}(X\backslash  Z_2) )=\overline{\mu_T}(Y)+\overline{\mu_T}(X)
\]
which gives us that $\overline{\mu_T}(Y)=0$. Now we show that $\bigvee_{i=0}^\infty \mathcal{T}_T(\Omega \times [0,\infty))\cap X $ contains the fundamental intervals. A fundamental interval in $X$ has the form $\Delta(a_1,\ldots a_n)\times \Delta (b_1,\ldots,b_m) \cap X$ and
\begin{eqnarray*}
\mathcal{T}^m_T(\Delta(b_m,\ldots,b_1, a_1, \ldots, a_n) \times [0,\infty)) \cap X&&\\
= \Delta(a_1,\ldots, a_n) \times \Delta(b_1, \ldots, b_m) \cap X&&
\end{eqnarray*}
so $\bigvee_{i=0}^\infty \mathcal{T}_T(\Omega \times [0,\infty) )\cap X= \mathcal{B}(\Omega\times [0,\infty))\cap X$.
\end{proof}
Now we will show that $\cap_{i=0}^\infty \mathcal{T}_T^i(\Omega \times [0,\infty) )$ gives the right domain  in the case of two intervals (even in the non simple case). Let $l_1=\lfloor \frac{N_1}{a_1+1}\rfloor-a_2$ be the lowest digit of a branch on $[a_1,a_1+1)$ and $h_1=\lfloor \frac{N_1}{a_1}\rfloor-a_2-1$ the highest.  Define $l_2, h_2$ likewise. We first show that 

\begin{eqnarray*}
\mathcal{T}_T(\Omega \times [0,\infty)) &=&  [a_1,a_1+1) \times \left(0, \frac{N_2}{l_2}\right] \cup [a_2,a_2+1) \times \left(0, \frac{N_1}{l_1}\right] \\
\mathcal{T}_T^2(\Omega \times [0,\infty) )&=& [a_1,a_1+1) \times \left[\frac{N_2}{h_2+\frac{N_1}{l_1}}, \frac{N_2}{l_2}\right) \cup [a_2,a_2+1) \times \left[\frac{N_1}{h_1+\frac{N_2}{l_2}}, \frac{N_1}{l_1}\right) \\
\mathcal{T}_T^3(\Omega \times [0,\infty)) &=& [a_1,a_1+1) \times \left([N_2/h_2,N_1/h_1], [N_2/l_2,N_1/h_1,N_2/l_2] \right] \\
&& \cup [a_2,a_2+1) \times ([N_1/h_1,N_2/l_2], [N_1/l_1,N_2/h_2,N_1/l_1]]  \\
\vdots \hspace{1em} &&\\
\mathcal{T}_T^n(\Omega \times [0,\infty)) &=& [a_1,a_1+1) \times  \\
&& (\underbrace{[N_2/h_2,N_1/h_1,N_2/h_2,N_1/h_1,\ldots ]}_{n-1 \text{ terms if } n \text{ is odd, } n \text{ terms otherwise}}, \underbrace{[N_2/l_2,N_1/h_1,N_2/l_2,N_1/h_1,\ldots]}_{n \text{ terms if } n \text{ is odd, } n-1 \text{ terms otherwise}} ] \\
&& \cup [a_2,a_2+1) \times \\
&& (\underbrace{[N_1/h_1,N_2/l_2,N_1/h_1,N_2/l_2,\ldots]}_{n-1 \text{ terms if } n \text{ is odd, } n \text{ terms otherwise}}, \underbrace{[N_1/l_1,N_2/h_2,N_1/l_1,N_2/h_2,\ldots]}_{n \text{ terms if } n \text{ is odd, } n-1 \text{ terms otherwise}}]  .\\
\end{eqnarray*}
To show that this is indeed the case we need that on the intervals on the  $y$-coordinate of these rectangles have a length larger than $1$. This ensures that no gaps appear in between the cylinders (since in that case we have $\frac{N_i}{d +c_2}<\frac{N_i}{d+1 +c_1}$ for the interval $[c_1,c_2]$ and thus we find overlapping images for fundamental intervals). Now note that $[N_2/h_2,N_1/h_1,N_2/h_2,N_1/h_1,\ldots ]$ is a left convergent of $a_2$ when it has an even amount of terms. and $[N_2/l_2,N_1/h_1,N_2/l_2,N_1/h_1,\ldots]$ is a right convergent of $a_2+1$ when it has an odd amount of terms. Similarly $[N_1/h_1,N_2/l_2,N_1/h_1,N_2/l_2,\ldots]$ is a left convergent for $a_1$ when it has an odd amount of terms and $[N_1/l_1,N_2/h_2,N_1/l_1,N_2/h_2,\ldots]$ is a right convergent of $a_1+1$ when it has an even amount of terms. Therefore, if we write $ \mathcal{T}_T^n(\Omega \times [0,\infty))= [a_1,a_1+1) \times [b_n,b_n^*] \cup [a_2,a_2+1) \times [c_n, c_n^*]$ we have that $b_n^*-b_n>1$ and $ c_n^*-c_n>1$. Using the same observation we conclude that $X=\cap_{i=0}^\infty \mathcal{T}_T^i(\Omega \times [0,\infty) ) = a_1,a_1+1)\times [a_2,a_2+1]\cup [a_2,a_2+1)\times [a_1,a_1+1]$.\newline

\vspace{1em}

We will now numerically support our conjecture. Our algorithm works as follows: we determine the rectangles of $X_n:=\cap_{i=0}^n \mathcal{T}_T^i(\Omega \times [0,\infty) )$. On these rectangles we use the density $\frac{N}{(N+xy)^2}$, project this to the first coordinate and normalise it to find an approximation for the invariant density. We would like to remark that one can also find an approximation for the natural extension by taking a set of points in $\Omega\times \{0 \} $ and iterate these points. For various families of continued fractions this will give you the domain of the natural extension in the limit when taking the closure, see~\cite{KSS} for Nakada's $\alpha$-continued fractions and~\cite{EINN} for the Hurwitz complex continued fractions. The advantage of our method above is that it gives a smooth approximation of the invariant density in every step. Furthermore, our method finds a good approximation much faster.
First, we test our method in the case of two intervals. Let $a_1= 1$, $a_2= 2$ and $\textbf{N}=(12,12)$. Table~\ref{tab:wtheoretic} shows the difference between the densities found by the algorithm and the theoretical density. For comparison the table also includes a simulation of the density that is found by following orbits of typical points of the system (a more classical approach). See Figure~\ref{fig:oldversusnew} for a graphical comparison. Note that the graphs are so close to each other that it is difficult to see that they are different.

\begin{figure}[h!]
  \centering
{\includegraphics[width=0.45\textwidth]{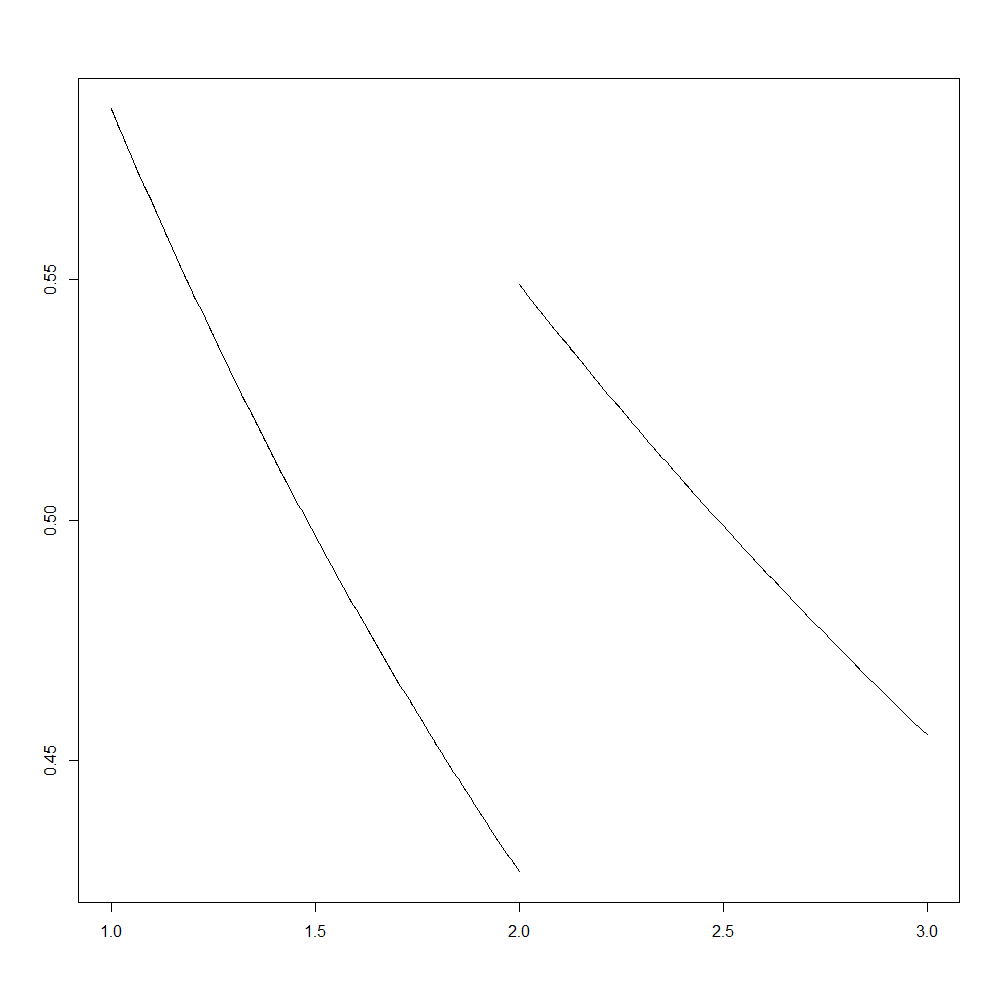}}
  \hfill
{\includegraphics[width=0.45\textwidth]{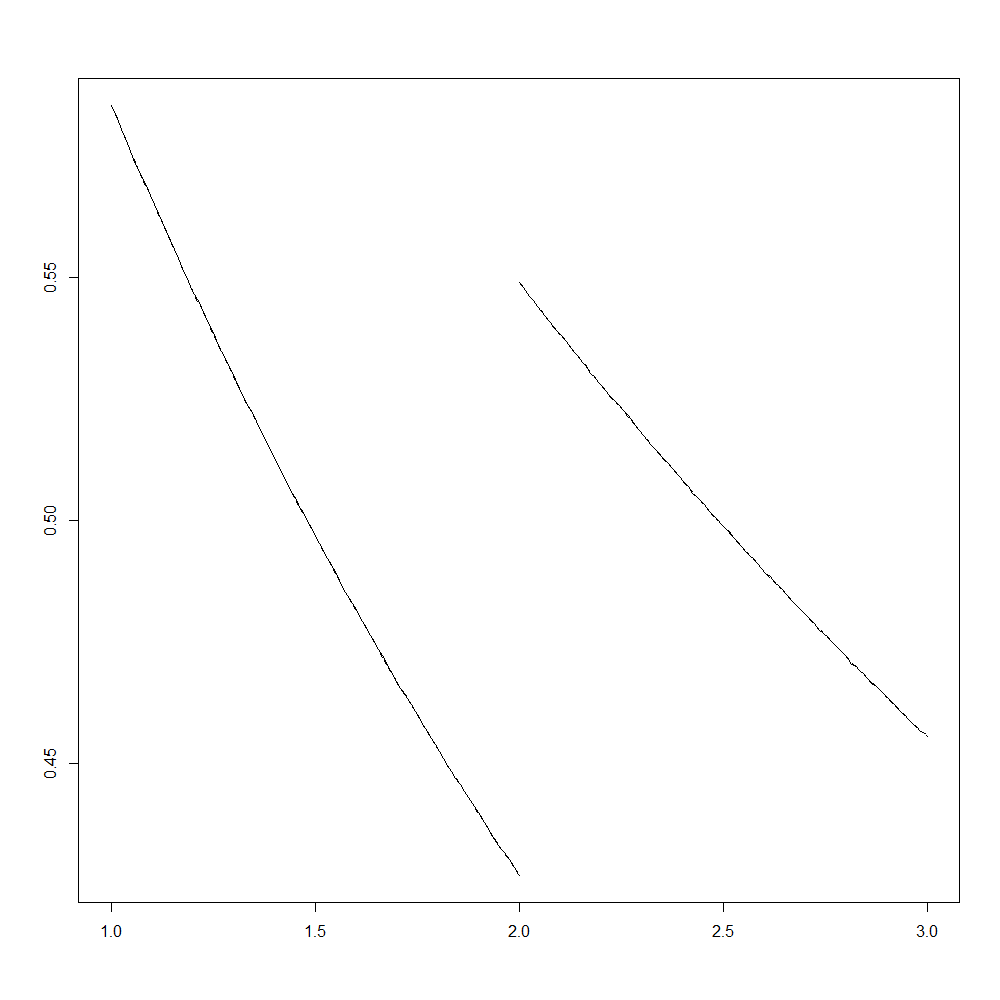}}
  \caption{On the left is the new method plotted against the theoretical density (dashed line). On the right is the classical method plotted against the theoretical  density (dashed line).}\label{fig:oldversusnew}
\end{figure}

\begin{table}[h!]
\begin{center}
\begin{tabular}{ c | c| c| c| c| c| c| }
 number of iterations & 0 & 1 & 2 & 3 & 7 & simulation \\ 
 \hline 
 difference  in $|| \, .  \, ||_1$ norm   & 0.52859 & 0.01763 & 0.00625 & 0.00238 & 4.98144e-05 & 0.00023 \\ 
\end{tabular}
\vspace{0.5em}
\caption{Number of iterations and the difference in $|| \, .  \, ||_1$ norm with  the theoretical density. For the normal simulation the computer iterated points for 1 hour (the new method is calculated within a second).}
\label{tab:wtheoretic}
\end{center}
\end{table}

In case we have more than two intervals we are unable to compare the approximations with the true invariant density. We can compare the approximations with a simulation of the density that is found by following orbits of typical points. For the previous example, comparing the seventh approximation with a normal simulation gives $0.0002379945$. 

\vspace{1em}
Let us now turn to a new example. Take $a_1=1$, $a_2=3$, $a_3=2$ and $\textbf{N}=(12,12,12)$, see Figure~\ref{fig:Texamplesim}. The result of the comparison of the method with a normal simulation can be found in Table~\ref{tab:wotheoretic}. 
To further justify our conjecture we can look at $r_n=\frac{\hat{\mu}_T(X_n)-\hat{\mu}_T(X_{n+1})}{\hat{\mu}_T(X_n)}$ where $\hat{\mu}(X_n)$ is the non normalised version of $\overline{\mu_T}$. This will give an indication of the fraction of mass that one 'throws away in the iteration'. If we want to have that $X=\cap_{i=0}^\infty \mathcal{T}_T^i(\Omega \times [0,\infty) )$ has positive measure we need that $r_n\rightarrow 0$ as $n\rightarrow \infty$. In Table~\ref{tab:thrownaway} we find the result and see that this indeed seems to be the case.

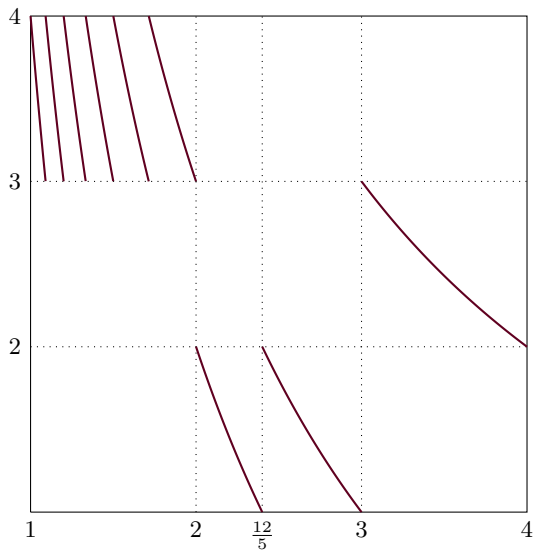
\begin{figure}[h]
\centering
\begin{tikzpicture}[scale=2.2]

\draw(1,1)node[below]{\small $1$}--(4,1)node[below]{\small $4$}--(4,4)--(1,4)node[left]{\small $4$}--(1,1);
 \draw[dotted] (2.4,1)node[below]{\small $\frac{12}{5}$}--(2.4,4);

\draw[dotted] (2,1)node[below]{\small $2$}--(2,4);
\draw[dotted] (1,2)node[left]{\small $2$}--(4,2);
\draw[dotted] (3,1)node[below]{\small $3$}--(3,4);
\draw[dotted] (1,3)node[left]{\small $3$}--(4,3);

\draw[thick, purple!50!black, smooth, samples =20, domain=3:4] plot(\x,{(12 /\x -1)});

\draw[thick, purple!50!black, smooth, samples =20, domain=2.4:3] plot(\x,{(12 /\x -3)});
\draw[thick, purple!50!black, smooth, samples =20, domain=2:2.4] plot(\x,{(12 /\x -4)});

\draw[thick, purple!50!black, smooth, samples =20, domain=12/7:2] plot(\x,{(12 /\x -3)});
\draw[thick, purple!50!black, smooth, samples =20, domain=12/8:12/7] plot(\x,{(12 /\x -4)});
\draw[thick, purple!50!black, smooth, samples =20, domain=12/9:12/8] plot(\x,{(12 /\x -5)});
\draw[thick, purple!50!black, smooth, samples =20, domain=12/10:12/9] plot(\x,{(12 /\x -6)});
\draw[thick, purple!50!black, smooth, samples =20, domain=12/11:12/10] plot(\x,{(12 /\x -7)});
\draw[thick, purple!50!black, smooth, samples =20, domain=1:12/11] plot(\x,{(12 /\x -8)});

\end{tikzpicture}
\caption{The map $T$ for $a_1=1$, $a_2=3$, $a_3=2$ and $\textbf{N}=(12,12,12)$.}
\label{fig:Texamplesim}
\end{figure}

\begin{table}[h!]
\begin{center}
\begin{tabular}{ c | c| c| c| c| c|  }
 number of iterations & 0 & 1 & 2 & 3 & 7 \\ 
 \hline 
 difference  in $|| \, .  \, ||_1$ norm   &  0.0495665  &  0.01142  &  0.01029  &  0.00294 &  0.00037 \\ 
\end{tabular}
\vspace{0.5em}
\caption{Number of iterations and the difference in $|| \, .  \, ||_1$ norm with  a normal density. For the normal simulation the computer iterated points for 1 hour (the new method is calculated within a second).}
\label{tab:wotheoretic}
\end{center}
\end{table}

\begin{table}[h!]
\begin{center}
\begin{tabular}{ c | c| c| c| c| c|  }
 number of iterations n & 0 & 1 & 2 & 3 & 7 \\ 
 \hline 
 $r_n$ &  0.52052  &  0.38379  & 0.38686  &  0.21580 &  0.08922 \\ 
\end{tabular}
\vspace{0.5em}
\caption{$r_n=\frac{\hat{\mu}_T(X_n)-\hat{\mu}_T(X_{n+1})}{\hat{\mu}_T(X_n)}$ gives an indication how much mass is thrown away at each iteration.}
\label{tab:thrownaway}
\end{center}
\end{table}
Now we can improve our approximation in the following way. We can calculate rectangles in which we know the domain must be a subset of and use this as an initial domain instead of $\Omega \times [0,\infty)$. We first determine $L,H$ such that the natural extension domain for $x\in[a_1,a_1+1]$ is a subset of $[a_1,a_1+1]\times [L,H]$. Afterwards we take  image of this rectangle to find the other rectangles. For $L$ we find the periodic point \newline $[\overline{N/h_2,N/l_3,N/h_1,N/l_2,N/h_3,N/l_1}]=[\overline{12/4,12/1,12/8,12/3,12/1,12/3}]=\frac{75}{59}$ and for $H$ we find \newline
$[\overline{N/l_2,N/h_3,N/l_1,N/h_2,N/l_3,N/h_1}]=[\overline{12/3,12/1,12/3,12/4,12/1,12/8}]=\frac{80}{41}$.
This gives us the first block $[a_1,a_1+1]\times [\frac{75}{59},\frac{80}{41}]$, the second block we find is 
\[
\mathcal{T}_T([a_1,a_1+1]\times [\frac{75}{59},\frac{80}{41}])=[a_3,a_3+1]\times [\frac{41}{34},\frac{59}{21}]
\]
and the third block is given by 
\[
\mathcal{T}_T^2([a_1,a_1+1]\times [\frac{75}{59},\frac{80}{41}])=[a_3,a_3+1]\times [\frac{63}{20},\frac{136}{25}]
\]

When we compare this with a simulation of the density which is found by following orbits of typical points of the system we get the results in Table~\ref{tab:withbetterinitial}. See Figure~\ref{fig:rect} for the domains $X_0,X_1,X_7$ and Figure~\ref{fig:densunknown} for the approximation of the invariant density after 7 iterations. Note that the differences in Table~\ref{tab:withbetterinitial} are not drastically changing. For the two methods to be closer to each other it is likely that one needs a more precise approximation for the classical simulation.

\begin{table}[h!]
\begin{center}
\begin{tabular}{ c | c| c| c| c| }
 number of iterations & 0 & 1 & 2 & 7 \\ 
 \hline 
 difference  in $|| \, .  \, ||_1$ norm   &  4.794264e-04  &  4.001946e-04  &  3.304742e-04 & 3.263492e-04 \\ 
\end{tabular}
\vspace{0.5em}
\caption{Number of iterations and the difference in $|| \, .  \, ||_1$ norm with  a normal density with using a better initial state. For the normal simulation the computer iterated points for 1 hour (the new method is calculated within a second).}
\label{tab:withbetterinitial}
\end{center}
\end{table}

\begin{figure}[h!]
  \centering
{\includegraphics[width=0.30\textwidth]{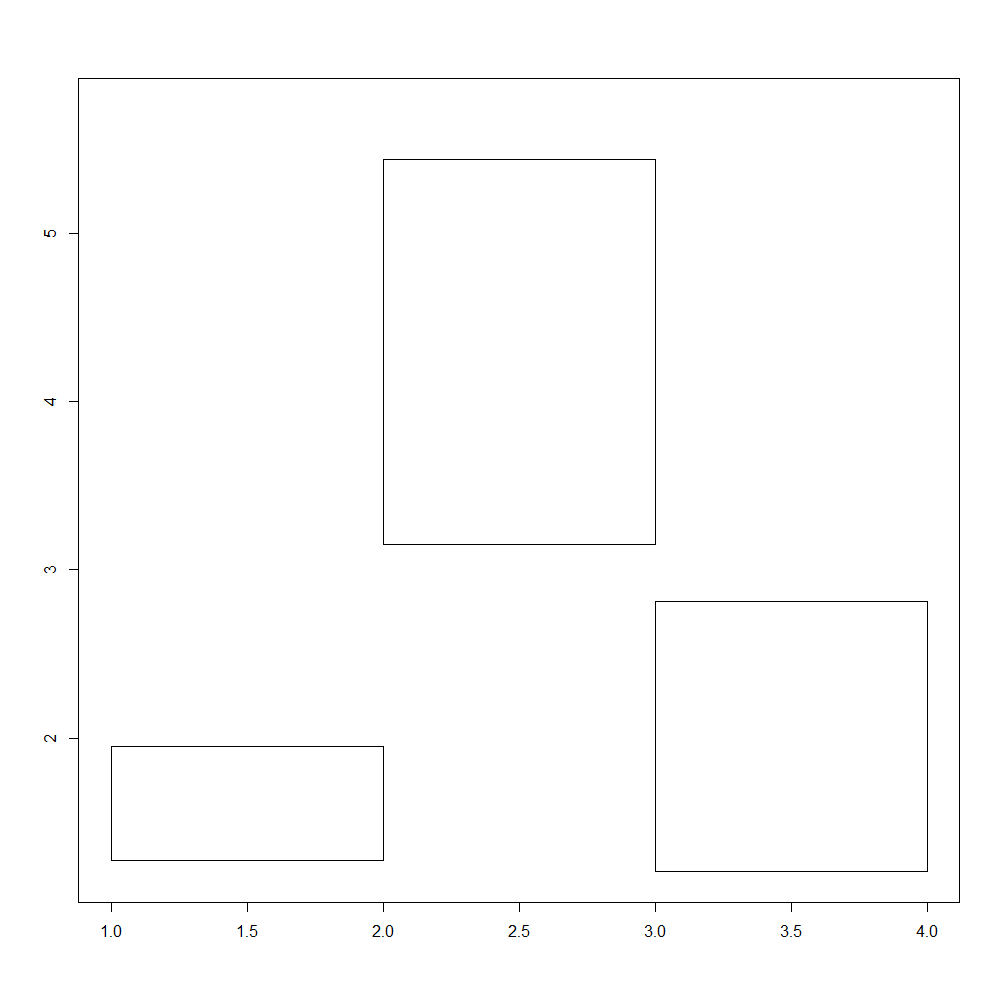}}
  \hfill
{\includegraphics[width=0.30\textwidth]{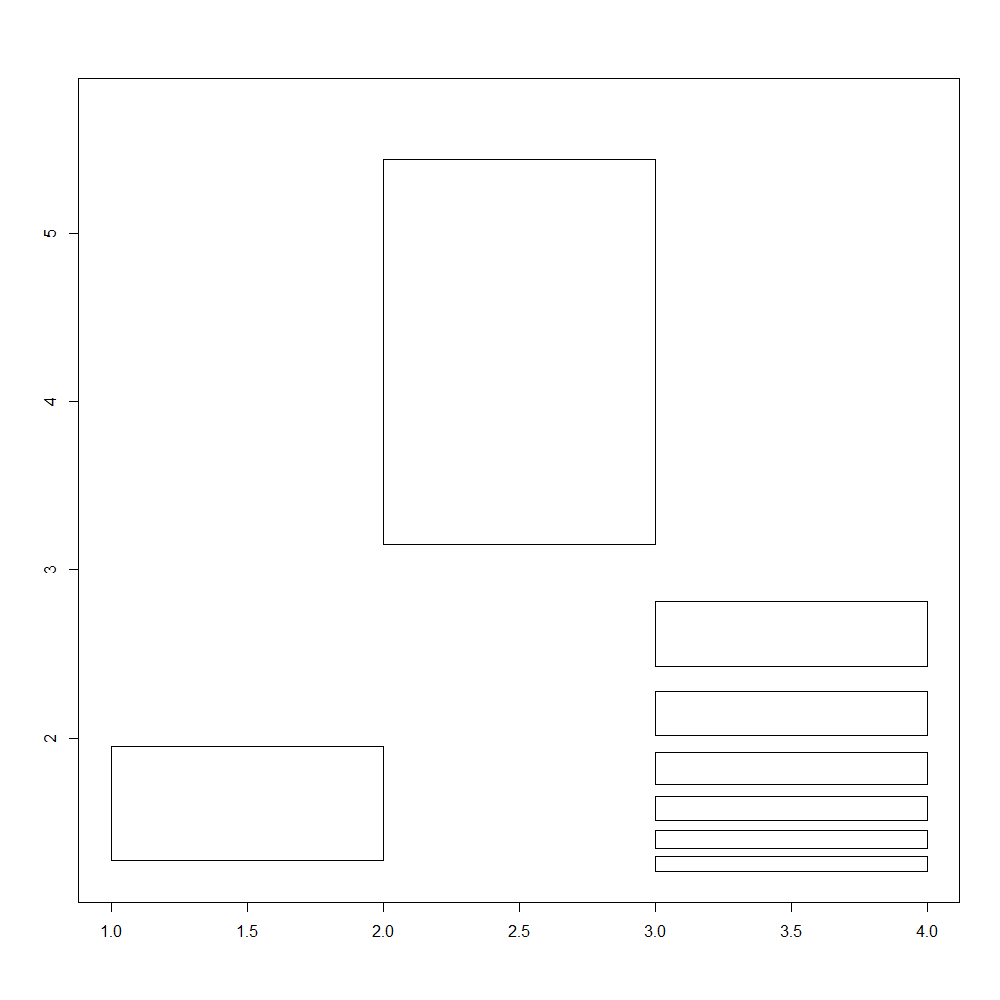}}
  \hfill
{\includegraphics[width=0.30\textwidth]{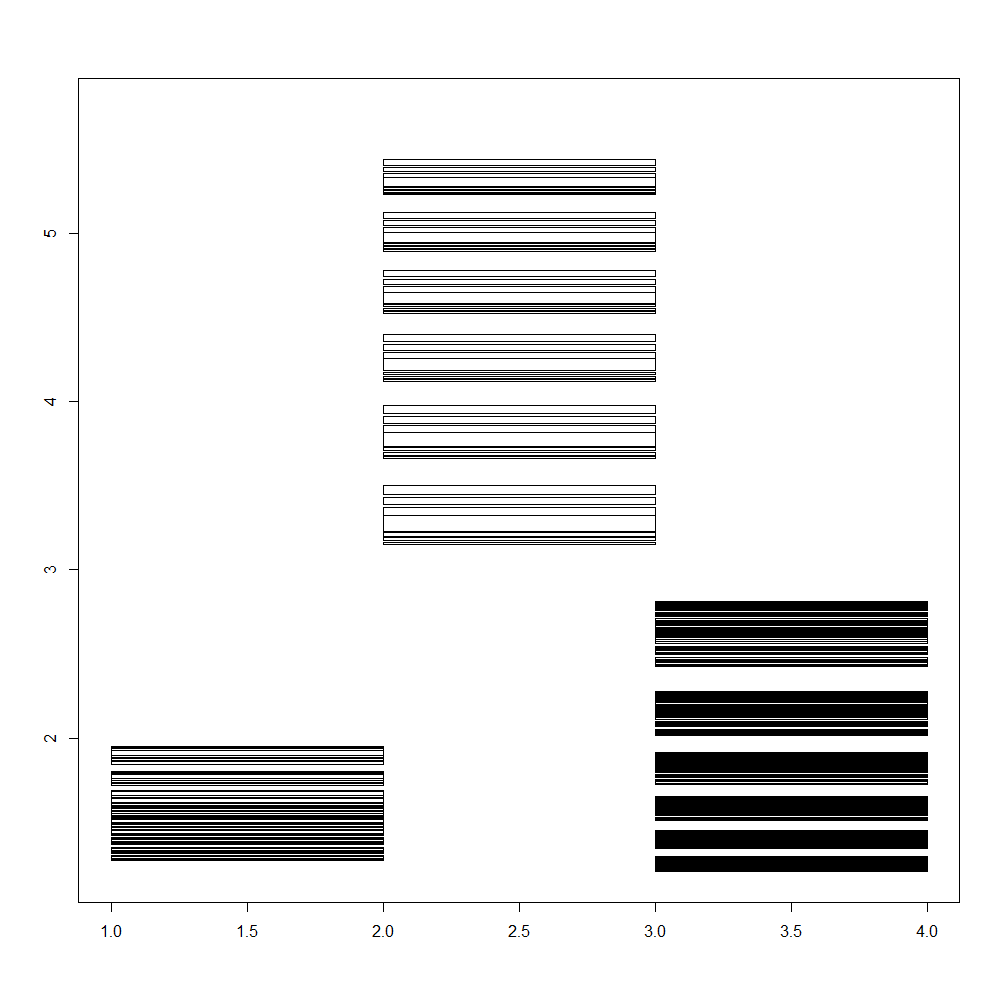}}
  \caption{From left to right, the domains $X_0,X_1$ and $X_7$.}\label{fig:rect}
\end{figure}

\begin{figure}[h!]
  \centering
{\includegraphics[width=0.5\textwidth]{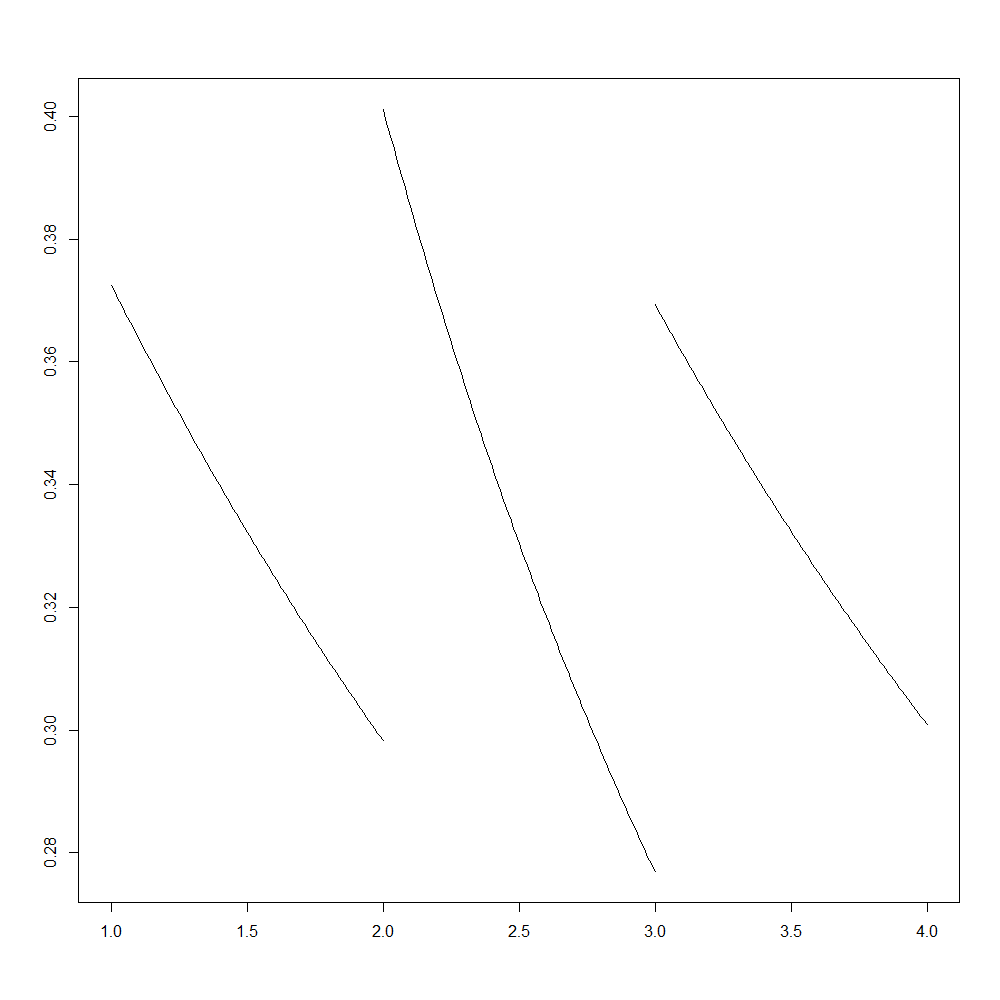}}
  \caption{The simulated density using $X_7$.}\label{fig:densunknown}
\end{figure}

Another advantage of our method is that, if our conjecture holds, in each step of the algorithm you obtain rigorous bounds for $\theta_n(x)$ for all $x\in \Omega$ similar to (\ref{ineq:thetha1}) and (\ref{ineq:thetha2}). Furthermore, when we improve our initial state and find an $L$ and $H$ as we did in our last example these bounds will be sharp.

\subsection{For our examples}
Let us now return to the examples of Section~\ref{sec:examples}. For Example 1 we know the system is ergodic, but we do not know the  domain of the natural extension and or the invariant density. In Figure~\ref{fig:examplesnatex} on the left we see a simulation of the domain. For Example 2 we know the domain of the natural extension namely $X= [1,2]\times [2,3]\cup [2,3]\times [1,2]$ but we do not know the density. For example 3 we do know the density but the domain of the natural extension does not seem to have a simple structure (i.e. is not a union of finitely many rectangles), see Figure~\ref{fig:examplesnatex} on the right. 

\begin{figure}[h!]
  \centering
{\includegraphics[width=0.45\textwidth]{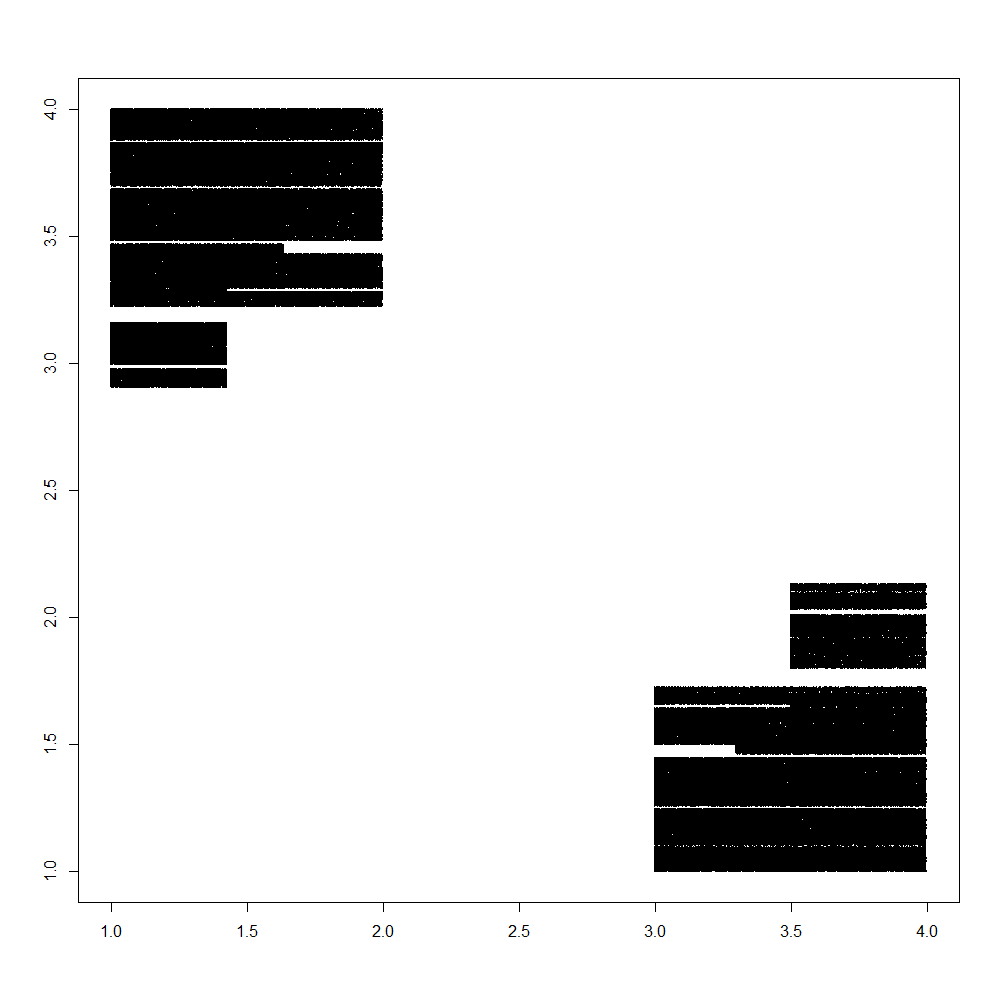}}
  \hfill
{\includegraphics[width=0.45\textwidth]{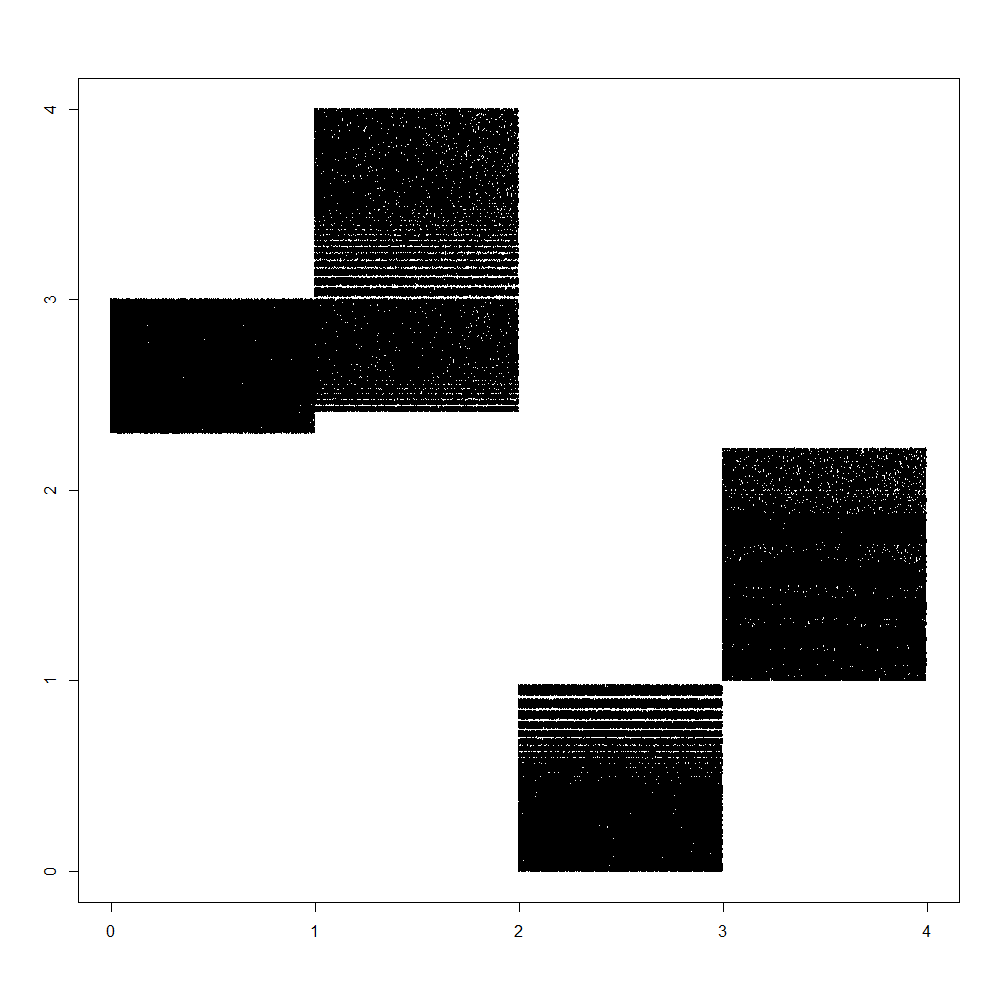}}
  \caption{Simulations of the domain of Example 1 on the left and Example 3 on the right.}\label{fig:examplesnatex}
\end{figure}

\section{Final Remarks and Open Questions}
First we want to mention that the proof for convergence and the proof of ergodicity did not depend on taking intervals with integer endpoints. When taking $N\in\mathbb{R}_{>1}$ the same proofs also hold.  \newline
One goal that we have not achieved is in finding the absolutely continuous invariant measure for the non simple case and for the case we have more than two intervals. \newline
A direction we did not investigate is in determining the symbolic spaces we can get with the systems we studied. What follows are some remarks on the matter. First we will look at integer vectors $\textbf{N}$ for which we can find a continued fraction algorithm. Note that for all $(N_1,N_2)$ we can find an allowable system by choosing $a_1=0,a_2=1$ and putting $N_1$ on $[a_1,a_1+1)$ and $N_2$ on $[a_2,a_2+1)$. This is allowed since $N_1>1$ so $\lfloor \frac{N_1}{1}\rfloor-1>0$ and since $N_2>1$ we have $\lfloor \frac{N_2}{2}\rfloor>0$. Whenever $N_1$ or $N_2$ is even we can choose to put the even one on $[1,2)$ and find a desirable case. In general we can ask: for a fixed $N_1$ for which values of $N_2$ can we find a continued fraction algorithm that is desirable? When putting $N_1$ on $[0,1)$ we will get the most choices of different intervals. Choices for $N_2$ to make a desirable continued fraction algorithm are given by multitudes of the numbers $\{2,6,12,\ldots, n(n+1),\ldots, (N_1-1)N_1\}$. Note that if there is no $n\in\mathbb{N}$ such that both $n$ and $n+1$ are divisors of $N_1$ then we have to put it on $[0,1)$ to make the system desirable. If there is such an $n$ we can add for all such $n$ the choice for $N_2>n$ and put this on $[0,1)$. It is easy to see that if we want to have a simple system ($N_1=N_2$) then it is necessary and sufficient that there is an $n\in\mathbb{N}$ such that $n$ and $n+1$ are divisors of $N_1$.

\bibliographystyle{alpha}
\bibliography{alternatingNexpansions}

\end{document}